\newcommand{\<}{\kern.0833em}
\newcommand{\Gp}{{\mathbf{Group}}}
\newcommand{\Rg}{\mathbf{Ring}^1}
\renewcommand{\r}{\mathrm}
\newcommand{\cP}{\raisebox{.1em}{$\<\scriptscriptstyle\coprod$}\nolinebreak[2]}
\newcommand{\V}{\fb{V}}
\newcommand{\strt}[1][1.7]{\vrule width0pt height0pt depth#1pt}
\newtheorem{theorem}{Theorem}
\newtheorem{lemma}[theorem]{Lemma}
\newtheorem{corollary}[theorem]{Corollary}
\newtheorem{proposition}[theorem]{Proposition}
\newtheorem{definition}[theorem]{Definition}
\newtheorem{question}[theorem]{Question}
\newcommand{\fb}{\mathbf}
\begin{document}

\title[Inner endomorphisms]{An inner automorphism is only an
inner automorphism,\\
but an inner endomorphism can be something strange}%
\thanks{%
ArXiv URL: \ \url{http://arXiv.org/abs/1001.1391}\,.
\\
\strt\hspace{1.5em}I discovered
the main results of Theorems~\ref{T.in_aut_G} and~\ref{T.end_R-Rg}
several decades ago, at which time I was partly supported
by a National Science Foundation grant, but
I cannot now reconstruct the date, and hence the grant number.
I spoke on those results at the January, 2009 AMS-MAA
Joint Meeting in Washington D.C..
After publication of this note, updates, errata,
related references etc., if found, will be recorded at
\url{http://math.berkeley.edu/~gbergman/papers/}
}

\subjclass[2010]{Primary: 16W20.
Secondary: 08B25, 16W25, 17B40, 18A25, 18C05, 20A99, 46L05.}
\keywords{Group, associative algebra, Lie algebra; inner automorphism,
inner endomorphism, inner derivation; comma category}

\author{George M. Bergman}
\address{George M. Bergman\\
Department of Mathematics\\
University of California\\
Berkeley, CA 94720-3840\\
USA}
\email{gbergman@math.berkeley.edu}

\begin{abstract}
The inner automorphisms of a group $G$ can be characterized
within the category of groups
without reference to group elements: they are precisely those
automorphisms of $G$ that can be extended, in a functorial
manner, to all groups $H$ given with homomorphisms $G\to H.$
(Precise statement in \S\ref{S.group}.)
The group of such extended systems of automorphisms,
unlike the group of inner automorphisms of $G$ itself,
is always isomorphic to $G.$
A similar characterization holds for inner automorphisms of an
associative algebra $R$ over a field $K;$
here the group of functorial systems of automorphisms is isomorphic
to the group of units of $R$ modulo the units of $K.$

If one looks at the above functorial extendibility property for
endomorphisms, rather than just automorphisms,
then in the group case, the only additional example
is the trivial endomorphism; but
in the $\!K\!$-algebra case, a construction unfamiliar to
ring theorists, but known to functional analysts, also arises.

Systems of endomorphisms with the same
functoriality property are examined in some other categories;
other uses of the phrase ``inner endomorphism'' in the literature,
some overlapping the one introduced here, are noted; the
concept of an inner {\em derivation} of an associative or
Lie algebra is looked at from the same point of view, and the dual
concept of a ``co-inner'' endomorphism is briefly examined.
Several open questions are noted.
\end{abstract}
\maketitle

\section*{Overview.}

You can read this overview if you'd like to know the topics of the
various sections to come; but
feel free to skip it if you'd prefer to plunge in, and let the
story tell itself.

In \S\ref{S.group}, we motivate the approach of this paper using the
case of groups.
We obtain the characterization of inner automorphisms of groups
that is stated in the abstract, and, modeled on this, we define
concepts of inner automorphism and inner endomorphism for an
object of a general category.

In \S\ref{S.ring}, these definitions are applied to associative unital
algebras over a commutative ring $K,$ and full characterizations
of the inner automorphisms and endomorphisms are
obtained in the case where $K$ is a field.
In \S\ref{S.non-field}, counterexamples are given to the obvious
generalizations of these results to base rings that are
not fields, and the question of what the general
inner automorphism and endomorphism might look like
in that case is examined.

In \S\ref{S.literature} we pause to survey concepts that have
appeared in the literature under the name ``inner endomorphism'',
with varying degrees of overlap with that of this note.

\S\ref{S.other} contains some easy observations on
inner automorphisms and endomorphisms (in the sense here defined)
on a few other sorts of algebraic objects.

In \S\S\ref{S.d_assoc}-\ref{S.Lie} we study inner {\em derivations}
of associative and Lie algebras, and also inner endomorphisms of
Lie algebras, pausing in \S\ref{S.deriv_def} to
consider what the general definition of ``inner derivation'' should be.

It is noted in \S\ref{S.co-inner} that our concept of inner
endomorphism dualizes to one of
{\em co-inner endomorphism}, and we determine these for the
category of $\!G\!$-sets, for $G$ a group.

In \S\ref{S.concl} we briefly look at the ideas of this paper from
the perspective of the theory of representable functors.

Although we were not able to obtain in \S\ref{S.non-field} a
full description of the inner endomorphisms of an
associative $\!K\!$-algebra when $K$ is not a field,
we prove in a final appendix, \S\ref{S.1-1}, using the partial
results of \S\ref{S.non-field}, that such inner endomorphisms
are always one-to-one.

Open questions are noted in \S\S\ref{S.group},
\ref{S.non-field},
\ref{S.d_assoc} and~\ref{S.Lie}.

I am indebted to Bill Arveson for helpful references,
and to the referee for many useful suggestions.

\section{Inner automorphisms and inner endomorphisms of groups.}\label{S.group}
Recall that an automorphism $\alpha$ of a group $G$ is called
{\em inner} if there exists an $s\in G$
such that $\alpha$ is given by conjugation by $s:$
\begin{equation}\label{d.cj_g}
\alpha(t)\ =\ s\,t\,s^{-1}\quad(t\in G).
\end{equation}

Given this definition, it may seem perverse to ask whether the
condition that $\alpha$ be inner can be characterized
without speaking of group elements.
Note, however, that the definition implies
the following property, which can indeed be so stated:
For every homomorphism $f$ of $G$ into a group $H,$ there
exists an automorphism $\beta_f$ of $H$
making a commuting square with $\alpha:$
\begin{equation}\begin{minipage}[c]{5pc}\label{d.*b}
\begin{picture}(50,55)
\multiput(0,5)(0,34){2}{
	\put(0,0){$G$}\put(13,3){\vector(1,0){20}}\put(34,0){$H$}
	\put(18,8){$f$}}
\put(45,5){.}
\put(6,35){\vector(0,-1){20}}\put(7,25){$\alpha$}
\put(40,35){\vector(0,-1){20}}\put(41,25){$\beta_f$}
\end{picture}\end{minipage}\end{equation}
(Namely, we can take $\beta_f$ to be conjugation by $f(s).)$

Whether this property alone is equivalent to $\alpha$ being inner,
I do not know; but the above conclusion can be strengthened.
Let $\alpha$ be as in~(\ref{d.cj_g}), and for every
group $H$ and homomorphism $f: G\to H$ let $\beta_f$ be, as
above, the inner automorphism of $H$ induced by $f(s).$
Then this system of automorphisms is ``coherent'',
in that for every commuting triangle of group homomorphisms
\begin{equation}\begin{minipage}[c]{5pc}\label{d.triangle}
\begin{picture}(50,50)
\put(0,22){$G$}
\put(14,31){\vector(2,1){22}}
	\put(18,41){$f_1$}
\put(14,19){\vector(2,-1){22}}
	\put(18,4){$f_2$}
\put(38,01){$H_2$}
\put(38,42){$H_1$}
\put(43,38){\vector(0,-1){26}}
	\put(44,25){$h$}
\end{picture}
\end{minipage}\end{equation}
one has
\begin{equation}\label{d.funct}
\beta_{f_2}\,h\ =\ h\,\beta_{f_1}.
\end{equation}

Let us show that this strengthened statement {\em is} equivalent to
$\alpha$ being inner; and that the family of morphisms $\beta_f$ does
what $\alpha$ alone in general does not: it uniquely determines $s.$

\begin{theorem}\label{T.in_aut_G}
Let $G$ be a group and $\alpha$ an automorphism of $G.$
Suppose we are given, for each group $H$ and homomorphism
\begin{equation}\label{d.f}
f:G\to H,
\end{equation}
an automorphism $\beta_f$ of $H,$ with the properties that\\[.2em]
\textup{(i)}\ \ $\beta_{\r{id}_G}\ =\ \alpha,$ and\\[.2em]
\textup{(ii)}\ \ for every commuting
triangle~\textup{(\ref{d.triangle})}
one has\textup{~(\ref{d.funct})}.\vspace{.2em}

Then there is a unique $s\in G$ such that for all $H$ and $f$
as in\textup{~(\ref{d.f})}, one has
\begin{equation}\label{d.*b_f}
\beta_f(t)\ =\ f(s)\,t\,f(s)^{-1}\quad (t\in H).
\end{equation}

In particular, $\alpha$ is inner.
Thus, an automorphism $\alpha$ of a group $G$ is inner if and only
if there exists such a system of automorphisms $\beta_f.$
\end{theorem}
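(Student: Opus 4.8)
The plan is to recover the element $s$ by looking at what the system $\beta_f$ does on a single universal test object: the group $G$ mapping into itself by the identity. Actually, the key trick is to feed the system a map $f$ into a group built from $G$ in which a candidate element is already visible. The natural choice is to take $H = G$ and $f = \mathrm{id}_G$; then condition (i) says $\beta_{\mathrm{id}_G} = \alpha$, and condition (ii), applied to triangles with $H_1 = H_2 = G$, forces $\alpha$ to commute with every automorphism $h$ of $G$ that fixes... no — we need inner freedom, so I would instead test against maps out of $G$ into overgroups.

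Here is the approach I would actually take. Consider the free product $H = G * \langle x \rangle$ with the inclusion $f_0 : G \hookrightarrow H$. By hypothesis we get an automorphism $\beta_{f_0}$ of $H$. The idea is that there should be a formula for $\beta_{f_0}$ forced by coherence, and evaluating it will produce the required $s \in G$. To pin $\beta_{f_0}$ down, I would exploit the many endomorphisms of $H$ that restrict to the identity on $G$: for each $g \in G$, there is a homomorphism $r_g : H \to G$ (a retraction) sending $x \mapsto g$ and fixing $G$ pointwise, fitting into a commuting triangle $G \xrightarrow{f_0} H \xrightarrow{r_g} G$ with the other leg $\mathrm{id}_G$. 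Coherence~(\ref{d.funct}) then gives $\alpha \, r_g = r_g \, \beta_{f_0}$ for every $g$, i.e. $r_g(\beta_{f_0}(w)) = \alpha(r_g(w))$ for all $w \in H$. Taking $w = x$: the elements $r_g(\beta_{f_0}(x))$, as $g$ ranges over $G$, must all equal $\alpha(r_g(x)) = \alpha(g)$. A word $w \in G * \langle x\rangle$ whose image under every such retraction $r_g$ is $\alpha(g)$ — comparing for varying $g$ — is forced to have a very restricted form; working this out should show $\beta_{f_0}(x) = s\,x\,s^{-1}$ for a single fixed $s \in G$ (the $g$-independent "conjugating part"), and similarly $\beta_{f_0}(t) = s\,t\,s^{-1}$ for $t \in G$, recovering~(\ref{d.*b_f}) for $f_0$.

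Having found $s$ from the universal case, I would propagate the formula to an arbitrary $f : G \to H$ by functoriality: given such an $f$, form the unique homomorphism $\tilde f : G * \langle x \rangle \to H$ extending $f$ and sending $x \mapsto$ (anything, say the identity — or better, use that every element of $H$ is hit by choosing the target of $x$ appropriately), giving a commuting triangle $G \xrightarrow{f_0} G*\langle x\rangle \xrightarrow{\tilde f} H$ whose long side is $f$. Then~(\ref{d.funct}) reads $\beta_f \, \tilde f = \tilde f \, \beta_{f_0}$, so for $u \in H$ of the form $u = \tilde f(w)$ we get $\beta_f(u) = \tilde f(\beta_{f_0}(w)) = \tilde f(s' \, w \, s'^{-1})$ for the appropriate conjugator, which upon choosing $w$ mapping to $u$ yields $\beta_f(u) = f(s)\, u \, f(s)^{-1}$; since $\tilde f$ can be made surjective (let $x$ map to a generator, or handle $H$ generated by $f(G)$ together with one more element by enlarging the free factor), this covers all $u \in H$. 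Taking $f = \mathrm{id}_G$ and using (i) gives $\alpha(t) = s\,t\,s^{-1}$, so $\alpha$ is inner. Uniqueness of $s$ is immediate: if $s, s'$ both work, then $s\,t\,s^{-1} = s'\,t\,s'^{-1}$ for all $t$ in \emph{all} $H$ and all $f$; taking $f$ an embedding of $G$ into a group with trivial centralizer of $f(G)$ — e.g. $G \hookrightarrow G * \mathbb{Z}$ again, where the centralizer of $G$ is trivial whenever $G \neq 1$, and the case $G = 1$ is vacuous — forces $s = s'$.

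The main obstacle I expect is the combinatorial normal-form argument identifying $\beta_{f_0}(x)$: one must show that an element $w$ of the free product $G * \langle x \rangle$ satisfying $r_g(w) = \alpha(g)$ for all $g \in G$ simultaneously is necessarily of the form $s\,x^{\pm 1} s^{-1}$ with $s$ independent of $g$ — ruling out words involving $x$ with higher exponent sum or with $G$-letters interleaved in a $g$-dependent way. This requires care because a single retraction loses information; it is the comparison \emph{across all} $g$, together with the fact that $\beta_{f_0}$ is an \emph{automorphism} (so $w$ generates a free factor together with $f_0(G)$, or at least $\beta_{f_0}$ is invertible), that should nail down the form. A cleaner alternative, which I would try first, is to avoid normal forms entirely by using two extra generators, or by using the map $G \to G \times G$... but the free-product retraction argument above is the most robust route, and I expect the bulk of the work to be there.
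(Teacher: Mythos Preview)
Your overall architecture matches the paper's: work in the free product $G\langle x\rangle = G * \langle x\rangle$, let $w(x) = \beta_{f_0}(x)$, and propagate the formula to arbitrary $f:G\to H$ by sending $x$ to an arbitrary $t\in H$. The propagation step and the uniqueness argument are fine and essentially what the paper does.

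The gap is in how you determine $w(x)$. Your retractions $r_g: G\langle x\rangle \to G$ give only the condition $r_g(w) = \alpha(g)$ for all $g\in G$, i.e., that the \emph{word function} induced by $w$ on $G$ coincides with $\alpha$. That is far too weak to force $w = s\,x\,s^{-1}$: distinct reduced words can induce the same function on a given group. For a concrete failure, take $G=\{1\}$: the only retraction is trivial and imposes nothing, and the automorphism condition on $G\langle x\rangle \cong \mathbb{Z}$ still allows $w(x)=x^{-1}$, which is not of the required form. For $G=\mathbb{Z}/2=\{1,a\}$ with $\alpha=\mathrm{id}$, the word $w=x\,a\,x\,a\,x$ passes both retractions ($r_1(w)=1$, $r_a(w)=a$) yet is not $s\,x^{\pm1}s^{-1}$. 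You gesture at using the automorphism property of $\beta_{f_0}$ to close this, but you do not show how, and in the trivial-$G$ case it demonstrably does not suffice on its own; one must bring in the behavior of the system on \emph{larger} groups than $G$.

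The paper does exactly that, and in fact it is the ``two extra generators'' alternative you mention and then set aside. It first runs your propagation argument to conclude $\beta_f(t)=w_f(t)$ for every $f$ and every $t$, and only then pins down $w$ by applying the homomorphism property of $\beta_\eta$ on $G\langle x_0,x_1\rangle$, obtaining the single identity
\[
w(x_0\,x_1)\ =\ w(x_0)\,w(x_1)
\]
in the free product $G\langle x_0,x_1\rangle$. A short normal-form comparison (all $x_0$'s precede all $x_1$'s on the right, but they alternate on the left) forces $w$ to contain exactly one $x$ with exponent $+1$, whence $w(x)=s_0\,x\,s_1$, and substituting back gives $s_1 s_0 = 1$. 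This replaces your open-ended combinatorial problem with a two-line argument; you should swap the retractions for it.
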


\begin{proof}
To investigate the system of maps $\beta_f,$ let us look
at their behavior on a ``generic'' element.
Since the domains of these maps are groups with homomorphisms of $G$
into them, a {\em generic} member of such a group will be the
element $x$ of the group $G\langle x\rangle$ obtained by adjoining
to $G$ one additional element $x$ and no additional relations.
(This group is the coproduct $G\cP\langle x\rangle$ of $G$ with the
free group $\langle x\rangle$ on one generator; in group-theorists'
language and notation, the free product $G*\langle x\rangle.)$

So letting $\eta$ be the inclusion map $G\to G\langle x\rangle,$
consider the element $\beta_\eta(x)\in G\langle x\rangle.$
By the structure theorem for coproducts of groups,
this can be written $w(x),$ where $w$ is a reduced word in $x$
and the elements of $G.$
Note that for any map $f$ of $G$ into a group $H,$ and any element
$t\in H,$ we can form a triangle~(\ref{d.triangle}) with
$H_1=G\langle x\rangle,$ $f_1=\eta,$ $H_2=H,$ $f_2=f,$ and $h(x)=t.$
(There is a unique such $h$ making~(\ref{d.triangle}) commute,
by the universal property of $G\langle x\rangle.)$
By~(\ref{d.funct}), the element
$\beta_f\,h(x)=\beta_f(t)$ is equal to
$h\,\beta_\eta(x)=h\,w(x)=w_f(t),$ where $w_f$ denotes the
result of substituting for the elements of $G$ in the
word $w$ their images under $f.$
Thus, $\beta_f$ acts by carrying every $t\in H$ to $w_f(t).$

Conversely, starting with any element $w(x)\in G\langle x\rangle,$
the formula $\beta_f(t)=w_f(t)$ clearly gives a {\em set}~map
$\beta_f: H\to H$ for each $f$ as in~(\ref{d.f}), in such a way
that~(ii) above holds.
To determine when these set maps respect the group operation, we should
consider the effect of the map induced by
$w(x)$ on the {\em product} of a generic {\em pair} of elements.
So we now take the group $G\langle x_0,x_1\rangle$ gotten by
adjoining to $G$ two elements and no relations, let $\eta$ be the
inclusion of $G$ therein, and consider the relation
$\beta_\eta(x_0\,x_1)\ =\ \beta_\eta(x_0)\,\beta_\eta(x_1),$ i.e.,
\begin{equation}\label{d.wxy}
w(x_0\,x_1)\ =\ w(x_0)\,w(x_1).
\end{equation}

When we transform the product on
the right-hand side of~(\ref{d.wxy}) into a reduced
word in $x_0,\ x_1$ and nonidentity elements of $G,$ the only
reduction that can occur is the simplification of the product
of the factors from $G$ at the right end of $w(x_0)$ and
at the left end of $w(x_1);$ in particular,
all occurrences of $x_0$ continue to occur
to the left of all occurrences of $x_1.$
On the other hand, in the left side of~(\ref{d.wxy}), each
occurrence of $x_0$ or $x_1$ is adjacent to an occurrence of the other.
It is easy to deduce that $w(x)$ can contain at most one occurrence
of $x,$ and that such an $x,$ if it occurs, must have exponent $+1.$
Moreover, if there were no occurrences of $x,$ then the functions
$\beta_f$ would be constant, hence could not give automorphisms
of nontrivial groups $H;$ so $w(x)$ must have the
form $s_0\,x\,s_1.$
Substituting into~(\ref{d.wxy}), we find that $s_1\,s_0=1;$
hence letting $s=s_0,$ we have $w(x)= s\,x\,s^{-1}.$

Thus, the maps $\beta_f$ have the form~(\ref{d.*b_f}).
Moreover, distinct elements $s$ give distinct words $w(x),$ hence
give distinct systems of automorphisms, since they act differently
on $x\in G\langle x\rangle;$ so such a system of automorphisms
determines $s$ uniquely.

Combining the above description of $\beta_f$ with condition~(i),
we see that our original automorphism $\alpha$ is inner.
This gives the ``if\,'' direction of the final sentence of the theorem;
the remarks at the beginning of this section give ``only if\,''.
\end{proof}

(In the above theorem, we did not explicitly
assume commutativity of the diagrams~(\ref{d.*b}).
But in view of condition~(i), that commutativity requirement,
for given $h,$ is the case of condition~(ii) where
$H_1=G,$ $f_1=\r{id}_G,$ $H_2=H,$ $h=f_2=f.)$

For fixed $G$ one can clearly compose two coherent systems of
automorphisms of the sort considered in Theorem~\ref{T.in_aut_G}
to get another such system; and we see from the theorem that under
composition, these systems form a group isomorphic to $G.$

There is
an elegant formulation of this fact in terms of comma categories.
Recall that for any object $X$ of a category $\fb{C},$
the category whose objects are objects $Y$ of $\fb{C}$ given with
morphisms $X\to Y,$ and whose morphisms are commuting triangles
analogous to~(\ref{d.triangle}), is denoted $(X\downarrow\fb{C})$
(called a ``comma category'' because of the older notation $(X,\fb{C});$
see~\cite[\S II.6]{CW}).
A system of maps $\beta_f$ as in Theorem~\ref{T.in_aut_G}
associates to each object $f:G\to H$ of the comma
category $(G\downarrow \Gp)$ an automorphism, not of that object,
but of the group $H;$ i.e., of the value, at that
object $f:G\to H,$ of the forgetful functor $(G\downarrow \Gp)\to\Gp.$
Our condition~(\ref{d.funct}) says that these automorphisms
$\beta_f$ should together comprise an automorphism of that forgetful
functor.
In summary,

\begin{theorem}\label{T.aut_forget}
For any group $G,$ the automorphism group of the forgetful functor
$U:(G\downarrow \Gp)\to\Gp$ is isomorphic to $G,$ via the map
taking each $s\in G$ to the automorphism of $U$
given by~\textup{(\ref{d.*b_f})}.\qed
\end{theorem}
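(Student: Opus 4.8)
The plan is to observe that Theorem~\ref{T.aut_forget} is essentially a restatement of Theorem~\ref{T.in_aut_G}, packaged in the language of comma categories, so the proof should do little more than unwind definitions and cite the earlier result. First I would recall that an automorphism of the forgetful functor $U:(G\downarrow\Gp)\to\Gp$ is, by definition, a natural isomorphism $\beta:U\Rightarrow U$; concretely, it assigns to each object $f:G\to H$ of $(G\downarrow\Gp)$ an automorphism $\beta_f$ of $U(f)=H$, subject to the naturality condition that for each morphism of $(G\downarrow\Gp)$ — i.e.\ each commuting triangle~(\ref{d.triangle}) with apex $G$ — the square relating $\beta_{f_1}$, $\beta_{f_2}$ and $U(h)=h$ commutes, which is precisely equation~(\ref{d.funct}). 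So the data of such a natural transformation is exactly the data of a coherent system $\{\beta_f\}$ satisfying condition~(ii) of Theorem~\ref{T.in_aut_G}, except that condition~(i) (the normalization $\beta_{\r{id}_G}=\alpha$) is not imposed in advance; instead $\alpha:=\beta_{\r{id}_G}$ is simply read off, and it is automatically an automorphism of $G$ since $\beta$ takes values in automorphisms.

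Next I would apply Theorem~\ref{T.in_aut_G} to this $\alpha$ and system $\{\beta_f\}$: it yields a unique $s\in G$ with $\beta_f(t)=f(s)\,t\,f(s)^{-1}$ for all $f$ and all $t\in H$, which is exactly formula~(\ref{d.*b_f}). Conversely, I would check that for each $s\in G$, the assignment $f\mapsto\bigl(t\mapsto f(s)\,t\,f(s)^{-1}\bigr)$ really is a natural automorphism of $U$: each such map is an inner automorphism of $H$, hence an automorphism, and naturality against a triangle~(\ref{d.triangle}) is the identity $h(f_1(s)\,x\,f_1(s)^{-1}) = f_2(s)\,h(x)\,f_2(s)^{-1}$, which holds because $h$ is a homomorphism and $h f_1 = f_2$. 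This establishes a bijection between $G$ and $\mathrm{Aut}(U)$.

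Finally, I would verify that this bijection is a group homomorphism. Composition in $\mathrm{Aut}(U)$ is componentwise: $(\beta\circ\beta')_f=\beta_f\circ\beta'_f$. If $\beta,\beta'$ correspond to $s,s'\in G$, then for $t\in H$ one computes $\beta_f(\beta'_f(t))=f(s)\bigl(f(s')\,t\,f(s')^{-1}\bigr)f(s)^{-1}=f(ss')\,t\,f(ss')^{-1}$, so $\beta\circ\beta'$ corresponds to $ss'$; hence $s\mapsto\beta$ is an isomorphism $G\xrightarrow{\sim}\mathrm{Aut}(U)$, as asserted. I do not anticipate a genuine obstacle here: the only point needing any care is the bookkeeping translation between "natural transformation of $U$" and "coherent system $\{\beta_f\}$", making sure the naturality squares over morphisms of $(G\downarrow\Gp)$ match condition~(ii) exactly — including the degenerate case $H_1=G$, $f_1=\r{id}_G$ already flagged in the parenthetical remark after Theorem~\ref{T.in_aut_G} — after which everything is immediate from Theorem~\ref{T.in_aut_G} and a one-line computation.
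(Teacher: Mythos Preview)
Your proposal is correct and follows exactly the approach of the paper: the theorem is stated there with a bare \qed, the preceding paragraphs having already explained that a natural automorphism of $U$ is precisely a coherent system $\{\beta_f\}$ satisfying~(\ref{d.funct}), whence Theorem~\ref{T.in_aut_G} gives the bijection with $G$, and that composition of such systems corresponds to the group law. You have simply written out in detail what the paper leaves to the reader.
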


In the proof of Theorem~\ref{T.in_aut_G} we used
the assumption that $\alpha$ and the $\beta_f$ were
automorphisms, rather than simply endomorphisms, only once;
to exclude the case where $w(x)$ contained no occurrences of $x.$
In that case,~(\ref{d.wxy}) forces $w$ to be the identity element,
whence the $\beta_f$ are the trivial endomorphisms, $\varepsilon(t)=1.$
So we have

\begin{corollary}[to proof of Theorem~\ref{T.in_aut_G}]\label{C.in_end_G}
If in the hypotheses of Theorem~\ref{T.in_aut_G} one everywhere
substitutes ``endomorphism'' for ``automorphism'', then the
possibilities for $(\beta_f)$
are as stated there, together with one additional
case: where every $\beta_f$ is the trivial endomorphism of $H.$
In the language of Theorem~\ref{T.aut_forget},
the endomorphism monoid of the forgetful functor
$(G\downarrow \Gp)\to\Gp$ is isomorphic to $G\cup\{\varepsilon\},$
where $\varepsilon$ is a zero element.
\qed
\end{corollary}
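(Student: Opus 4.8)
The plan is to re-run the argument of Theorem~\ref{T.in_aut_G} almost verbatim, tracking carefully the one and only place where the automorphism hypothesis was used. First I would recall that the analysis of $\beta_\eta(x)\in G\langle x\rangle$ as a reduced word $w(x)$, and the deduction from the triangle~(\ref{d.triangle}) that $\beta_f$ necessarily acts by $t\mapsto w_f(t)$, used only condition~(ii) and made no use of invertibility; hence that part transfers without change. Likewise the converse observation, that any $w(x)$ yields set-maps $\beta_f$ satisfying~(ii), is unchanged.

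Next I would redo the multiplicativity analysis via equation~(\ref{d.wxy}) in $G\langle x_0,x_1\rangle$. The combinatorial argument showing that $w(x)$ contains at most one occurrence of $x$, with exponent $+1$ if present, is purely about reduced words and carries over. The difference comes at the step where, in the automorphism case, one excluded the possibility that $w(x)$ has no occurrence of $x$ by noting that a constant map cannot be an automorphism of a nontrivial group. In the endomorphism setting this exclusion is no longer available, so I must analyse that case directly: if $w(x)$ involves no $x$, write $w(x)=c$ for a fixed $c\in G$; then~(\ref{d.wxy}) reads $c=c\cdot c$, forcing $c=1$, so every $\beta_f$ is the constant map $t\mapsto 1$, i.e.\ the trivial endomorphism $\varepsilon$ of $H$. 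Conversely this family of trivial endomorphisms plainly satisfies~(i) (with $\alpha$ the trivial endomorphism of $G$) and~(ii), so it is a genuine new solution. In the remaining case $w(x)$ does involve $x$, and the original computation gives $w(x)=s\,x\,s^{-1}$ for a unique $s\in G$, exactly as before.

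For the reformulation in the language of Theorem~\ref{T.aut_forget}, I would observe that the monoid operation on coherent systems $(\beta_f)$ is composition, that on the subset coming from elements $s\in G$ this composition corresponds to multiplication in $G$ (as already noted after Theorem~\ref{T.in_aut_G}), and that composing the trivial system $\varepsilon$ with any system — on either side — again yields $\varepsilon$, since post- or pre-composing with the constant map $t\mapsto 1$ gives the constant map $t\mapsto 1$. Hence the endomorphism monoid of the forgetful functor is $G$ with an adjoined zero element, i.e.\ $G\cup\{\varepsilon\}$ as claimed.

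I do not expect a serious obstacle here: the proof is a careful bookkeeping exercise on top of the already-completed argument for Theorem~\ref{T.in_aut_G}. The only point requiring genuine (if minor) attention is verifying that the newly admitted constant-map system really does satisfy hypotheses~(i) and~(ii) and is the \emph{only} extra possibility — which is exactly what the case analysis of $w(x)$ above settles.
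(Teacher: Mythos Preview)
Your proposal is correct and follows essentially the same approach as the paper: the paper's proof consists precisely of the observation (stated just before the corollary) that the automorphism hypothesis was used only once, to exclude the case where $w(x)$ has no occurrence of $x$, and that in that case~(\ref{d.wxy}) forces $w=1$, yielding the trivial endomorphism. You have simply spelled out more of the details, including the monoid-structure verification that $\varepsilon$ is a two-sided zero, which the paper leaves implicit.
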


(There is nothing exotic about the trivial endomorphism;
so the second half of the title of this note does not apply to
the category of groups.)

Let us abstract, and name, the concepts we have been using.

\begin{definition}\label{D.inner}
If $X$ is an object of a category $\fb{C},$ then an endomorphism
\textup{(}respectively automorphism\textup{)}
of the forgetful functor $(X\downarrow\fb{C})\to\fb{C}$ will be called
an {\em extended} \textup{(}or if there is danger of
ambiguity, ``$\fb{C}\!$-extended''\textup{)} inner endomorphism
\textup{(}resp., inner automorphism\textup{)} of $X.$

An extended inner endomorphism or automorphism will at times
be denoted $(\beta_f),$ where $f$ is understood to run over
all $f:X\to Y$ in $(X\downarrow\fb{C}),$ and the $\beta_f$
are the corresponding endomorphisms or automorphisms of the
objects $Y.$

An endomorphism or automorphism of $X$ will be called {\em inner}
if it is the value at $\r{id}_X$ of an extended
inner endomorphism or automorphism of $X.$
When there is danger of ambiguity, one may add ``in the
category-theoretic sense'' and/or ``with respect to $\fb{C}\!$''.
\end{definition}

So, like ``monomorphism'' and ``epimorphism'',
the term ``inner'' will acquire a certain tension between
a pre-existing sense and a category-theoretic sense, which
will agree in many, but not necessarily in all cases where
the former is defined.
In subsequent sections we will study the
category-theoretic concept in several other categories.

We remark that if $\fb{C}$ is a legitimate category (i.e., if its
hom-sets $\fb{C}(X,Y)$ are small sets -- in classical
language, sets rather than proper classes -- but if its
object-set may be large), then the monoids of endomorphisms,
respectively, the groups of automorphisms, of the forgetful functors
$(X\downarrow\fb{C})\to\fb{C}$ are not, in general, {\em small} monoids
or groups.
However, there is a set-theoretic approach that handles such
size-problems
elegantly; see \cite[{\S}6.4]{245} (cf.\ \cite[{\S\S}I.6-7]{CW}).
Aside from this point, these constructions behave very
nicely:  if $f:X_1\to X_2$ is a morphism,
it is easy to see that an extended inner endomorphism or automorphism
of $X_1$ induces via $f$ an extended inner endomorphism or automorphism
of $X_2$ (in contrast with the behavior of ordinary endomorphisms,
automorphisms, and inner automorphisms); thus,
these constructions give {\em functors} from $\fb{C}$
to the categories of (possibly large) monoids and groups.
(However, we shall not use this observation below.)

We end our consideration of these concepts
in $\Gp$ by recording a question mentioned above,
generalized from automorphisms to endomorphisms.

\begin{question}\label{Q.group}
If $\alpha$ is an endomorphism of a group $G,$ such that for
each object $f:G\to H$ of $(G\downarrow\nolinebreak[3]\Gp)$
there exists an endomorphism $\beta_f$ of $H$ making the
diagram\textup{~(\ref{d.*b})} commute, must $\alpha$ then be inner
in the sense of Definition~\ref{D.inner}; i.e., is it then possible
to choose such endomorphisms $\beta_f$ so as to
satisfy\textup{~(\ref{d.funct})} for
all commuting triangles\textup{~(\ref{d.triangle})}?
\textup{(}By the preceding results, this is equivalent to:
Must $\alpha$ either be an inner automorphism in the classical sense,
or the trivial endomorphism?\textup{)}
\end{question}

\section{The case of $\!K\!$-algebras.}\label{S.ring}

Let us now consider the same  questions for rings.

Let $\Rg$ denote the category of all associative unital rings.
A general difficulty in the study of universal constructions in
this category is the nontriviality of the multilinear algebra of
abelian groups, i.e., $\!\mathbb{Z}\!$-modules.
Often things are no worse if we generalize our considerations
to the category $\Rg_K$ of associative unital algebras over
a general commutative ring $K,$ and they then become much better if we
assume $K$ a field.
Below, we shall begin the analysis of inner endomorphisms
of $\!K\!$-algebras for $K$ a general commutative ring;
then, about half-way through, we will have to
restrict ourselves to the case where $K$ is a field.
In the next section we will examine
what versions of our result might be true for general $K.$

So let $K$ be any commutative ring (where ``associative unital''
is understood), and $R$ any nonzero object of $\Rg_K.$

We will again use generic elements.
The extension of $R$ by a single generic element $x$
in $\Rg_K$ has the $\!K\!$-module decomposition
\begin{equation}\label{d.Rx}
R\langle x\rangle\ =\ \ R\ \oplus\ (R\,x R)\ \oplus
\ (R\,x R\,x R)\ \oplus\ \dots\ \ \cong
\ \ R\ \oplus\ (R\otimes R)\ \oplus\ (R\otimes R\otimes R)
\ \oplus\ \dots\,.
\end{equation}
Here the tensor products are as $\!K\!$-modules.
Tensor products over $K$ will be almost the only tensor products used
in this note, so we make the convention that $\otimes,$ without
a subscript, denotes $\otimes_K.$

The extension of $R$ by two generic elements similarly has form
\begin{equation}\label{d.Rx0x1}
R\langle x_0,\,x_1\rangle\ =
\bigoplus_{\substack{n\geq 0\\[.2em] i_1,\,\dots,\,i_n\in\{0,1\}}}
R\,x_{i_1}R\dots R\,x_{i_n}R\quad\cong
\bigoplus_{\substack{n\geq 0\\[.2em] i_1,\dots,i_n\in\{0,1\}}}
R\otimes R\otimes\ldots\otimes R\otimes R\,.
\end{equation}

Exactly as in the proof of Theorem~\ref{T.in_aut_G},
every extended inner endomorphism of $R$ will be determined
by the image under it of $x\in R\langle x\rangle,$ which will be some
element $w(x)\in R\langle x\rangle.$
And again, every $w(x)\in R\langle x\rangle$ induces,
for each object $f:R\to S$ of $(R\downarrow\Rg_K),$
a set map of $S$ into itself, sending each $r\in S$ to $w_f(r)\in S,$
and these maps respect morphisms among such objects.
So again, our task is to determine for which $w(x)\in R\langle x\rangle$
the induced set-maps $S\to S$ are $\!K\!$-algebra homomorphisms.

These maps will respect addition if and only if the required
equation holds in the generic case, i.e., if and only if,
in $R\,\langle x_0,\,x_1\rangle,$
\begin{equation}\label{d.wx0+x1}
w(x_0+x_1)\ =\ w(x_0)\ +\ w(x_1)\,.
\end{equation}
I claim that the only elements $w(x)\in R\langle x\rangle$
satisfying~(\ref{d.wx0+x1})
are those which are homogeneous of degree~$1$ in $x;$ i.e.,
lie in the summand $R\,x R$ of~(\ref{d.Rx}).
Indeed, if $w(x)$ had a nonzero component in one of the higher degree
summands in~(\ref{d.Rx}), then on substituting $x_0+x_1$ for $x,$ one
of the nonzero components we would get in the left-hand side
of~(\ref{d.wx0+x1}) would lie in a summand of~(\ref{d.Rx0x1}) that
involved both $x_0$ and $x_1,$ while this is not true of the
right-hand side of~(\ref{d.wx0+x1}).
On the other hand, if $w(x)$ had a nonzero component $a$ in degree zero,
then the degree-zero
component of the left-hand side of~(\ref{d.wx0+x1}) would
be $a,$ while that of the right-hand side would be $2a.$
So $w(x)$ is homogeneous of degree~$1;$ i.e., we may write
\begin{equation}\label{d.wx=sum}
w(x)\ =\ \sum_1^n a_i\,x\,b_i
\end{equation}
for some $a_1,\dots,a_n,\ b_1,\dots,b_n\in R.$
This necessary condition for~(\ref{d.wx0+x1}) to hold is sufficient
as well; in fact, it clearly implies that the functions induced
by $w(x)$ respect the $\!K\!$-module structure.

It remains to bring in the conditions that the operation induced by
$w(x)$ respect $1,$ and respect multiplication.
The former condition says that
\begin{equation}\label{d.1|->1}
w(1)\ =\ 1,
\end{equation}
i.e.,
\begin{equation}\label{d.sum=1}
\sum_1^n a_i\,b_i\ =\ 1,
\end{equation}
while the latter condition,
\begin{equation}\label{d.wx0x1}
w(x_0\,x_1)\ =\ w(x_0)\ w(x_1)\,,
\end{equation}
translates to
\begin{equation}\label{d.sumsum}
\sum_{i=1}^n\ a_i\,x_0\,x_1\,b_i\ =
\ \sum_{j=1}^n\ \sum_{k=1}^n\ a_j\,x_0\,b_j\,a_k\,x_1\,b_k\,.
\end{equation}

To study these conditions, let us now assume that $K$ is a field.
In that case, if there is any $\!K\!$-linear dependence relation
among the coefficients $a_1,\dots,a_n$ in~(\ref{d.wx=sum}),
then we can rewrite one of these elements
as a $\!K\!$-linear combination of the rest, substitute
into~(\ref{d.wx=sum}),
collect terms with the same left-hand factor, and thus
transform~(\ref{d.wx=sum}) into an expression of
the same form, but with a smaller number of summands.
We can do the same if there is a $\!K\!$-linear relation
among $b_1,\dots,b_n.$
Hence, if we choose the expression~(\ref{d.wx=sum}) to
minimize $n,$ we get
\begin{equation}\label{d.lin_indep}
a_1,\dots,a_n\ \ \mbox{are $\!K\!$-linearly independent, and}
\ \ b_1,\dots,b_n\ \ \mbox{are $\!K\!$-linearly independent.}
\end{equation}

Now let $A$ be any $\!K\!$-vector-space
basis of $R$ containing $a_1,\dots,a_n,$
and $B$ any basis containing $b_1,\dots,b_n.$
Then as a $\!K\!$-vector-space, the summand
$R\,x_0\,R\,x_1\,R\cong R\otimes R\otimes R$ of~(\ref{d.Rx0x1}),
in which the two sides of~(\ref{d.sumsum}) lie, decomposes as a
direct sum $\bigoplus_{a\in A,\,b\in B}\,a\,x_0\,R\,x_1\,b.$
If for each $j$ and $k$ we take take the component of~(\ref{d.sumsum})
in $a_j\,x_0\,R\,x_1\,b_k\cong R,$ and drop the outer factors
$a_j\,x_0$ and $x_1\,b_k,$ we get the equation in~$R,$
\begin{equation}\label{d.*dij}
\delta_{jk}\ =\ b_j\,a_k\quad (j,k=1,\dots,n).
\end{equation}

What this says is that if we write $a$ for the row
vector over $R$ formed by $a_1,\dots,a_n,$ and $b$ for the column
vector formed by $b_1,\dots,b_n,$ then $b\,a$ is the identity
matrix $I_n.$
On the other hand,~(\ref{d.sum=1}) says that $a\,b$ is the
$1\times 1$ identity matrix $I_1.$
Thus, regarding these vectors as describing homomorphisms
of right $\!R\!$-modules $a:R^n\to R$ and $b:R\to R^n,$
these relations say that $a$ and $b$ constitute an isomorphism
\begin{equation}\label{d.n=1}
R^n\ \cong\ R\quad\mbox{as right $\!R\!$-modules}.
\end{equation}

For many sorts of rings $R$ (e.g., any ring admitting a homomorphism
into a field),~(\ref{d.n=1}) can only hold for $n=1.$
In such cases, $a$ and $b$ become mutually inverse
elements, so~(\ref{d.wx=sum}) takes
the form $w(x)=a\,x\,a^{-1},$ and our inner
endomorphism is an inner automorphism in the classical sense.
The element $a$ such that $w(x)=a\,x\,a^{-1}$
is easily seen to be determined up to a scalar factor
in $K,$ so the group of extended inner automorphisms of $R$ is
isomorphic to the quotient group of the units of $R$ by the
units of~$K.$

On the other hand, there are rings $R$ admitting
isomorphisms~(\ref{d.n=1})
for $n>1$ \cite{WGL}, \cite{IBN}, \cite{ALSC}, \cite{coproducts2}.
If in such an $R$ we take a row vector
$a$ and column vector $b$ describing such an isomorphism,
then by the above computations, the
element $w(x)=\sum a_i\,x\,b_i$ determines an unfamiliar
sort of extended inner endomorphism of $R.$
It is not hard to verify that this system of maps
can be described as follows.

Since (for any ring $R)$
the ring of endomorphisms of the right $\!R\!$-module
$R^n$ is isomorphic to the \mbox{$n\times n$}
matrix ring $M_n(R),$ a module isomorphism~(\ref{d.n=1})
yields a $\!K\!$-algebra isomorphism $M_n(R)\cong M_1(R).$
Moreover, for every object $f:R\to S$ of $(R\downarrow\Rg_K),$ the
vectors $a$ and $b$ over $R$ induce vectors $f(a),\ f(b)$ over $S$
satisfying the same relations, and hence likewise inducing
isomorphisms of matrix rings.
The endomorphism
of $S$ induced by $w(x)$ can now be described as the composite
\begin{equation}\label{d.S>M_n>S}
S\ \xrightarrow{\r{diag.}}\ M_n(S)
\ \xrightarrow[\cong]{((r_{ij}))\ \mapsto \ \sum f(a_i)\,r_{ij}\,f(b_j)}
\ S\,.
\end{equation}
Since the right-hand arrow in~(\ref{d.S>M_n>S}) is bijective, the
composite arrow will, like the left-hand arrow, always be one-to-one,
but will not be surjective for any nonzero $S$ unless $n=1;$
so the latter is the only case where the above
construction gives {\em automorphisms} of the algebras $S.$

These observations are summarized below, along with a final assertion
which the reader should not find hard to verify, which corresponds
to a description of the degree of
nonuniqueness of the expression for an element $w=\sum a_i\otimes b_i$
in a tensor product of $\!K\!$-vector-spaces, when written using
the smallest number of summands (the rank of the element as a tensor);
equivalently, using $\!K\!$-linearly independent $a_i$ and $b_j.$
Note that~(\ref{d.*dij}), which we deduced using those conditions
of $\!K\!$-linear independence, clearly also implies them.

\begin{theorem}\label{T.end_R-Rg}
Let $K$ be a field, and $R$ a nonzero $\!K\!$-algebra.
Then for every extended inner {\em automorphism} $(\beta_f)$ of $R,$
there is an invertible element $a\in R,$ unique up to a
scalar factor, such that for each $f:R\to S,$ the automorphism
$\beta_f$ of $S$ is given by conjugation by $f(a).$

More generally, each extended inner {\em endomorphism} of $R$ has
the form\textup{~(\ref{d.S>M_n>S})} for a pair $(a,b),$ where
for some $n,$ $a=(a_i)$ is a length-$\!n\!$ row vector over $R,$
and $b=(b_i)$ a height-$\!n\!$ column vector,
satisfying\textup{~(\ref{d.sum=1})} and\textup{~(\ref{d.*dij})},
equivalently, describing an isomorphism~\textup{(\ref{d.n=1})}.
Two such pairs of vectors $(a,b)$ and $(a',b'),$ associated with
integers $n$ and $n'$ respectively, determine the
same extended inner endomorphism if and only if $n=n'$ and
there exists some $U\in\r{GL}(n,K)$ such that
\begin{equation}\label{d.aUb}
a'\ =\ a\,U,\quad b'\ =\ U^{-1}b.
\end{equation}
\qed
\end{theorem}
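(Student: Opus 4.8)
The plan is to assemble the statement from the analysis already carried out in the section, filling in only the uniqueness clause at the end, which is the one part not yet proved. For the automorphism assertion: an extended inner automorphism is in particular an extended inner endomorphism, hence given by some $w(x)=\sum_1^n a_i\,x\,b_i$ with $\!K\!$-linearly independent $a_i$ and $b_i$ satisfying~(\ref{d.sum=1}) and~(\ref{d.*dij}); these say $a,b$ describe an isomorphism $R^n\cong R$ of right $\!R\!$-modules, and by the remark following~(\ref{d.S>M_n>S}) the induced maps are surjective only when $n=1$. So $n=1$, $a=a_1$ is a unit with inverse $b=b_1$, and $w(x)=a\,x\,a^{-1}$; taking the value at $\r{id}_R$ recovers that $\beta_f$ is conjugation by $f(a)$. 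Uniqueness of $a$ up to a scalar is the $n=1$ case of the general uniqueness clause, so I would prove that clause once and for all.

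For the endomorphism assertion: the computations leading to~(\ref{d.wx=sum}), (\ref{d.sum=1}), (\ref{d.*dij}) show every extended inner endomorphism arises from such a pair $(a,b)$, and the description~(\ref{d.S>M_n>S}) identifies the resulting system of maps; conversely any pair satisfying~(\ref{d.sum=1}) and~(\ref{d.*dij}) gives, via~(\ref{d.S>M_n>S}), a system $(\beta_f)$ that is coherent (it respects morphisms $S\to S'$ since $f(a),f(b)$ push forward) and each $\beta_f$ is a $\!K\!$-algebra homomorphism (it respects $+$ and the $\!K\!$-action by homogeneity of degree $1$, respects $1$ by~(\ref{d.sum=1}), and respects multiplication by~(\ref{d.*dij}), these being exactly the reverse of the derivations above). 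As noted in the excerpt, the conditions~(\ref{d.sum=1}) and~(\ref{d.*dij}) already force the $a_i$ to be $\!K\!$-linearly independent (if $\sum c_i a_i=0$ then $c_k=\delta_{jk}$-pairing gives $c_j=\sum_k c_k\,b_j a_k=b_j\sum_k c_k a_k=0$... more precisely pair with $b_j$ on the left to get $c_j=0$), and symmetrically for the $b_i$; so "satisfying~(\ref{d.sum=1}) and~(\ref{d.*dij})" and "describing an isomorphism~(\ref{d.n=1})" are interchangeable, and I would state that equivalence explicitly.

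The remaining work, and the only genuinely new computation, is the uniqueness clause: two pairs $(a,b)$ with $a$ a row vector of length $n$, $b$ a column of height $n$, and $(a',b')$ with length/height $n'$, give the same extended inner endomorphism iff $n=n'$ and $a'=aU$, $b'=U^{-1}b$ for some $U\in\r{GL}(n,K)$. The "if" direction is a direct substitution into~(\ref{d.S>M_n>S}): the maps $((r_{ij}))\mapsto \sum f(a_i)r_{ij}f(b_j)$ for $(a,b)$ and for $(aU,U^{-1}b)$ agree after the change of variables $r\mapsto U r U^{-1}$ on $M_n(S)$, which commutes with the diagonal embedding $S\to M_n(S)$ since $U$ has scalar entries. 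For "only if": two systems $(\beta_f)$ agree iff the corresponding $w(x),\,w'(x)\in R\langle x\rangle$ coincide (they agree on the generic element $x$, by the argument at the start of the section), i.e. $\sum a_i\,x\,b_i=\sum a'_i\,x\,b'_i$ in the summand $R\,x\,R\cong R\otimes R$ of~(\ref{d.Rx}). So the question reduces to: when do $\sum a_i\otimes b_i$ and $\sum a'_i\otimes b'_i$ represent the same element of $R\otimes_K R$, given that both $\{a_i\}$ and $\{a'_i\}$ are linearly independent and both $\{b_i\}$, $\{b'_i\}$ are linearly independent? This is the standard fact about minimal-length (rank) representations of a tensor over a field: any two such expressions have the same number of terms, $n=n'$, and are related by $a'=aU$, $b'=U^{-1}b$ with $U\in\r{GL}(n,K)$ — proved by extending $\{a_i\}$ (resp. $\{a'_i\}$) to bases and reading off coefficients in $R\otimes R\cong \bigoplus_{\alpha\in A}\alpha\otimes R$, which forces the $\!K\!$-spans of $\{a_i\}$ and of $\{a'_i\}$ to coincide (each $b_i\ne 0$, each $b'_i\ne 0$), hence $a'=aU$ for an invertible scalar matrix $U$, and then comparing the $b$'s gives $b'=U^{-1}b$. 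The main obstacle is purely bookkeeping: carrying out this tensor-rank argument cleanly over the noncommutative ring $R$ while only using that $K$ is a field and that the relevant families are $\!K\!$-linearly independent; there is no conceptual difficulty, since all the ring-theoretic content (the role of~(\ref{d.n=1})) has already been extracted.
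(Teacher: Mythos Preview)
Your proposal is correct and follows exactly the paper's route: the theorem is stated as a summary of the preceding analysis in \S\ref{S.ring}, with the uniqueness clause left to the reader as ``a description of the degree of nonuniqueness of the expression for an element $w=\sum a_i\otimes b_i$ in a tensor product of $\!K\!$-vector-spaces, when written using the smallest number of summands (the rank of the element as a tensor)''; you have simply spelled out that tensor-rank argument. Your worry at the end about ``carrying out this tensor-rank argument cleanly over the noncommutative ring $R$'' is unwarranted: the tensor product $R\,x\,R\cong R\otimes_K R$ is taken over the field $K$, so only the $\!K\!$-vector-space structure of $R$ enters, and noncommutativity of $R$ plays no role in that step.
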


The conclusion $n=n'$ in the above result follows from the
uniqueness of $w(x),$ and hence of its rank as a member of
$R\otimes R;$ but let us note a way to see it directly, and
in fact to see that $n$ is determined by the value of our
extended inner endomorphism at any nonzero object $f:R\to S$ of
$(R\downarrow\Rg_K).$
From~(\ref{d.S>M_n>S}) we see that the {\em centralizer} in $S$ of the
image of our extended inner endomorphism will be
isomorphic to $M_n(Z(S))$
as a $\!Z(S)\!$-algebra, where $Z(S)$ is the center of $S.$
In particular, it will be free of rank $n^2$ as a module
over $Z(S);$ and free modules over commutative rings have unique rank.

We have noted that~(\ref{d.S>M_n>S}) shows that every extended inner
endomorphism $(\beta_f)$ of
$R$ consists of {\em one-to-one} endomorphisms $\beta_f.$
This too can be seen from elementary considerations:
Any $\!K\!$-algebra $S$ can be embedded in
a simple $\!K\!$-algebra $T;$
and any endomorphism of $S$ arising from
an extended inner endomorphism of $R$ will then extend to an
endomorphism of $T,$ which necessarily has trivial kernel.
(The embeddability of any $\!K\!$-algebra in a simple $\!K\!$-algebra
was proved in \cite[Corollary~1 and Remark~2]{bokut}.
A different method of getting such an embedding, noted for
Lie algebras in \cite[Theorem~B]{stewart},
is also applicable to associative algebras.)

It is not hard to add to Theorem~\ref{T.end_R-Rg}
the necessary and sufficient condition for two extended
inner endomorphisms of $R$ as in the final statement to agree,
not necessarily globally, but at $R,$ i.e., to
determine the same inner endomorphism of $R.$
The condition has the same form as~(\ref{d.aUb}),
but with $U$ now taken in $\r{GL}(n,Z(R)).$
For $n=1,$ this is the expected condition that the conjugating elements
differ by an invertible central factor in $R.$\vspace{.5em}

(For the reader familiar with~\cite[Chapter~III]{coalg} we remark that
$(R\downarrow\Rg_K)$ is the category there called $R\,\mbox{-}\Rg_K,$
and that the $R\langle x\rangle$ occurring in the above arguments
is the underlying algebra of the coalgebra object representing
the {\em forgetful} functor $R\,\mbox{-}\Rg_K\to\Rg_K.$
Since the values of that forgetful functor have, in particular,
additive group structures, the functor can
be regarded as $\!\fb{Ab}\!$-valued, so by
\cite[Theorem~13.15 and Corollary~14.8]{coalg}, its representing
$\!K\!$-algebra is freely generated over $R$ by an $\!(R,R)\!$-bimodule.
This is the $R\,x\,R\cong R\otimes R$ of~(\ref{d.Rx}).
Our extended inner endomorphisms of $R$ correspond to
endomorphisms of $R\langle x\rangle$ as a co-ring.
Since these are in particular co-abelian-group endomorphisms, they will
be induced by bimodule endomorphisms of $R\,x\,R;$
this is the content of~(\ref{d.wx=sum}).
Our subsequent arguments determine when
such an endomorphism respects the counit and comultiplication
of $R\langle x\rangle.)$

\section{What if $K$ is not a field?}\label{S.non-field}

For a general commutative ring $K$ and an arbitrary
object $R$ of $\Rg_K,$ any vectors $a,\ b$ over $R$ that
satisfy~(\ref{d.sum=1}) and~(\ref{d.*dij}) will still
yield an element $w(x)=\sum_1^n a_i\,x\,b_i$
inducing an extended inner endomorphism~(\ref{d.S>M_n>S})
of $R$ in $\Rg_K;$ but we can no longer say that every
extended inner endomorphism has this form.
As an easy counterexample, if $K$ is a direct product $K_1\times K_2$
of two fields, then $\Rg_K\cong\Rg_{K_1}\times\Rg_{K_2},$ and
one can show that any extended inner endomorphism of
an object $R_1\times R_2$ of $\Rg_K$ $(R_i\in\Rg_{K_i})$ is determined
by an extended inner endomorphism of $R_1$
and an extended inner endomorphism of $R_2.$
Now if $R_1$ and $R_2$ are both nonzero, and if they
respectively admit extended inner endomorphisms
$(\beta_{1,f})$ and $(\beta_{2,f}),$ associated with distinct
positive integers $n_1$ and $n_2,$ then these together
induce an extended inner endomorphism of $R$ which does not
have the form~(\ref{d.S>M_n>S}) for any $n.$

For a different sort of example, suppose $K$ is a commutative integral
domain having a nonprincipal invertible ideal $J,$ and let
$F$ be the field of fractions of $K.$
(Recall that an ideal $J$ of $K$ is called invertible if it
has an inverse in the multiplicative monoid of {\em fractional
ideals} of $K,$ that is, nonzero $\!K\!$-submodules of $F$
whose elements admit a common denominator.
The integral domains $K$ all of whose nonzero ideals
are invertible are the Dedekind domains \cite[Theorem~9.8]{A+M}.
Thus, any Dedekind domain that is not a PID has a nonprincipal
invertible ideal $J.)$
Suppose we
form the Laurent polynomial ring in one indeterminate, $F[\,t,t^{-1}],$
and within this, let $R$ be the subring $K[\,Jt,\,J^{-1}t^{-1}].$
Then in $R\langle x\rangle,$ the $\!K\!$-submodule
$J\,t\,x\,J^{-1}t^{-1}\cong J\otimes J^{-1}\cong K$ is free on one
generator, which we shall
call $w(x),$ and which we might write (in)formally as $t\,x\,t^{-1},$
though $t$ itself is not an element of $R.$
One finds that $w(x)$ satisfies (\ref{d.wx0+x1}), (\ref{d.1|->1})
and~(\ref{d.wx0x1}), and so induces an extended inner endomorphism;
but ``$t\,x\,t^{-1}$'' does not have the form $s\,x\,s^{-1}$ for any
invertible element $s\in R,$ so this extended inner endomorphism is
not as described in Theorem~\ref{T.end_R-Rg}.
Incidentally, this extended inner endomorphism has an inverse,
induced by ``$t^{-1}x\,t$'', so it is even an extended
inner automorphism (showing that the first half of our
title is not {\em quite} true).

In taking an example of maximal simplicity, we have ended up
with a commutative $R,$ so that the automorphism of $R$ itself
induced by the above extended inner automorphism is trivial, and
can be described as conjugation by $1\in R.$
To avoid this, let us freely adjoin to
the $\!F\!$-algebra $F[\,t,t^{-1}]$ another
noncommuting indeterminate, $u,$ getting the algebra
$F\langle t,\,t^{-1},\,u\rangle,$ and within this take
$R=K\langle Jt,\,J^{-1}t^{-1},\,u\rangle.$
Then the automorphism of $R$ induced by ``$t\,x\,t^{-1}$'' is now
nontrivial, and is still not inner in the classical sense;
in particular, it
takes $u$ to $t\,u\,t^{-1},$ though conjugation by no invertible
element of $R$ can do this.

Our general result for $K$ a field, and the above examples for
other sorts of $K,$ can be subsumed in a common construction:
Suppose $P$ is a $\!K\!$-module,
and $R$ a $\!K\!$-algebra having an isomorphism
\begin{equation}\label{d.PR>R}
a: P\otimes R\ \stackrel{\cong}{\longrightarrow}\ R
\end{equation}
as right $\!R\!$-modules.
(In the case where $K$ was a field, $P$ was an $\!n\!$-dimensional
vector space; in our $K_1\times K_2$ example, it was the
module $K_1^{n_1}\times K_2^{n_2};$
in the $K[Jt,\,J^{-1}t^{-1}]$ and
$K\langle Jt,\,J^{-1}t^{-1},\,u\rangle$ examples, it is $J.$
In this last case, one has an isomorphism~(\ref{d.PR>R})
$J\otimes R\cong R$ because $J\otimes R\cong JR=t^{-1}R\cong R,$
the middle equality holding because $R$ is closed in
$F[\,t,t^{-1}]$ under multiplication by $J^{-1}t^{-1}$ and $J\,t.)$
Such a map~(\ref{d.PR>R}) yields,
for every algebra $S$ with a homomorphism
$R\to S,$ a $\!K\!$-algebra homomorphism
\begin{equation}\label{d.S>End>S}
S\ \cong\ \r{End}_S(S_S)
\ \xrightarrow{P\otimes-}
\ \r{End}_S(P\otimes S_S)
\ \xrightarrow[\cong]{a\otimes_R\,-}\ \r{End}_S(S_S)\ \cong\ S.
\end{equation}
The $\!K\!$-module $P$ in this construction need not be unique.
For instance, if we take an example based on an
isomorphism~(\ref{d.n=1}), but where our $R$ is an algebra over
some epimorph $K'$ of $K$ (in the category-theoretic sense; e.g.,
a factor-ring or a localization), then regarding $R$ as a
$\!K\!$-algebra, we could choose the $\!K\!$-module $P$
of~(\ref{d.PR>R}) to be either $K^n$ or $K'^n.$

In all the cases looked at so far, our $\!K\!$-module $P$ either was,
or (in the above paragraph) could be taken to be, projective over $K.$
But there are examples where this is impossible:  Consider
any integral domain $K$ which has an
epimorph of the form $K_1\times K_2$ for fields $K_1$ and $K_2$
(e.g., $\mathbb{Z}$ has such homomorphic images).
Then if we construct, as in the first paragraph of
this section, an algebra $R$ over $K_1\times K_2$
and an extended inner endomorphism of $R$
based on a $\!K'\!$-module $P = K_1^{n_1}\times K_2^{n_2}$
with $n_1\neq n_2,$ this cannot arise from an example based on
a projective $\!K\!$-module.
This follows from the fact that
for a finitely generated projective module over an integral domain
$K,$ the rank is constant as a function on the prime spectrum of $K$
\cite[Ch.2, \S5, n\raisebox{.3em}{\scriptsize{o}}.\,2, Theor\`{e}me 1,
(a)$\Rightarrow$(c)]{AlgComm},
\cite[p.53, Exercise~22]{TYL}.

\begin{question}\label{Q.non-field}
If $K$ is a commutative ring and $R$ a nonzero object
of $\Rg_K,$ can every extended inner endomorphism of $R$ be obtained
as in~\textup{~(\ref{d.S>End>S})} from a module
isomorphism~\textup{~(\ref{d.PR>R})}?
\end{question}

The nonuniqueness of the $P$ in the above construction makes me dubious.

We saw in the preceding section that for $K$ a field, all
inner endomorphisms of $\!K\!$-algebras were one-to-one.
In an appendix, \S\ref{S.1-1}, we show
that the same is true for any $K.$

\section{Other concepts of ``inner endomorphism'' in the literature.}\label{S.literature}

A MathSciNet search for ``inner endomorphism'' leads
to a number of concepts, some
of which have interesting overlaps with the one we have been studying.

A striking case, to which we alluded in the
abstract, comes from the theory of $\!C^*\!$-algebras.
If $H$ is a Hilbert space, and $\mathcal{B}(H)$
the $\!C^*\!$-algebra of bounded operators $H\to H,$
it is shown in \cite[Proposition~2.1]{arveson} that {\em every}
endomorphism of the $\!C^*\!$-algebra $\mathcal{B}(H)$ has a
form analogous to what we found in Theorem~\ref{T.end_R-Rg}, namely
\begin{equation}\label{d.VAV*}
A\ \mapsto\ \sum V_i\,A\,V_i^*,
\end{equation}
where the $V_i$ are a (possibly infinite) family of
isometric embeddings $H\to H$ having mutually orthogonal ranges
which sum to $H,$ and $V_i^*$ is the adjoint of $V_i.$

Here is a heuristic sketch for the algebraist of why this is plausible.
Since complex Hilbert spaces look alike except for their dimension,
it is natural to generalize the problem of characterizing
endomorphisms of $\mathcal{B}(H)$ to that of characterizing
homomorphisms $\mathcal{B}(H_1)\to \mathcal{B}(H_2)$ for two
Hilbert spaces $H_1$ and $H_2.$
If $H_1$ and $H_2$ are finite-dimensional,
of dimensions $d_1$ and $d_2,$ then
$\mathcal{B}(H_1)$ and $\mathcal{B}(H_2)$ are matrix algebras
$M_{d_1}(\mathbb{C})$ and $M_{d_2}(\mathbb{C}).$
Temporarily ignoring the $\!C^*\!$ structure, we know that a
$\!\mathbb{C}\!$-algebra homomorphism
$M_{d_1}(\mathbb{C})\to M_{d_2}(\mathbb{C})$
exists if and only if $d_2= n\,d_1$ for some integer $n,$ and
that in this case, it can be gotten by writing $\mathbb{C}^{d_2}$ as
a direct sum of $n$ copies of $\mathbb{C}^{d_1},$ and letting
$M_{d_1}(\mathbb{C})$ act in the natural way on each of these.
If $a_1,\dots,a_n: \mathbb{C}^{d_1}\to\mathbb{C}^{d_2}$
are the chosen embeddings
and $b_1,\dots,b_n: \mathbb{C}^{d_2}\to\mathbb{C}^{d_1}$
the corresponding projections, the induced
map $M_{d_1}(\mathbb{C})\to M_{d_2}(\mathbb{C})$ is given by
\begin{equation}\label{d.airbi}
r\ \mapsto\ \sum a_i\,r\,b_i.
\end{equation}
If one wants this to be a homomorphism of $\!C^*\!$-algebras, one
has the additional requirement that the $a_i$ each map $H_1$ into $H_2$
isometrically, with orthogonal images; the projections $b_i$
will then be the adjoints of the $a_i.$
Now if instead of finite-dimensional Hilbert spaces we take
an infinite-dimensional Hilbert space $H,$ and
let $H_1=H_2=H,$ then for both finite and infinite $n,$ there exist
expressions of $H$ as a direct sum
(in the infinite case, a completed direct sum) of $n$ copies of itself.
The result of~\cite{arveson} says that all endomorphisms
of $\mathcal{B}(H)$ are expressible essentially as in the finite
dimensional case, in terms of such direct sum decompositions of $H.$

For $R$ any $\!C^*\!$-algebra, not necessarily of
the form $\mathcal{B}(H),$ a family of elements
$V_1,\dots,V_n\in R$ $(n<\infty)$ satisfying the $\!C^*\!$-algebra
relations corresponding to the conditions stated
following~(\ref{d.VAV*}) is equivalent to a homomorphism into $R$ of the
$\!C^*\!$-algebra presented by those generators
and relations; this $\!C^*\!$-algebra is denoted $\mathcal{O}_n.$
The objects $\mathcal{O}_n$ are called Cuntz
algebras, having been introduced by J.\,Cuntz~\cite{Cuntz}.
Since the above construction with $n=1$
gives inner automorphisms of $R$ in the classical sense,
endomorphisms of the form~(\ref{d.VAV*}) in a general $\!C^*\!$-algebra
(where they are not in general the only endomorphisms)
are called inner {\em endo}\/morphisms.

(For $n=\infty,$ things are not as neat.
Though in $\mathcal{B}(H),$ the infinite sums~(\ref{d.VAV*})
converge in a topology obtained from the Hilbert space $H,$ this
is not the topology arising from the $\!C^*\!$-norm on $\mathcal{B}(H).$
In defining the $\!C^*\!$-algebra $\mathcal{O}_\infty$
one has to omit the relation $\sum V_i\,V_i^*=1,$
because the infinite sum will not converge;
and maps of this object into a
$\!C^*\!$-algebra $R$ do not induce endomorphisms of $R,$
though they are still of interest.)
\vspace{.5em}

The next concept I will describe is not called an ``inner endomorphism''
by the author who studies it, though it did turn up in a MathSciNet
search for that phrase.
(In the paper in question,
``inner endomorphism'' is used for ``endomorphism of a subalgebra''.)
Namely, in~\cite{terms}, if $A$ is an algebra in the sense of
universal algebra, a {\em termal endomorphism} of $A$ means an
endomorphism $\alpha$ which is
expressible by a {\em term} in one variable $x,$ i.e., in
the notation of this note,
a word $w(x)$ in the operations of $A$ and constants taken from~$A.$

Note that if such a word $w(x)$ defines an endomorphism of $A,$
i.e., if the set map it determines respects all operations of $A,$
then this fact is equivalent to a family of identities in
the operations of $A$ and the constants occurring in $w.$
If $\V$ is some variety containing $A,$ those
identities need not be satisfied by all members of
$(A\downarrow\V),$ so $w$ may not define what we
are calling a $\!\V\!$-extended inner endomorphism of $A.$
However, if we regard $(A\downarrow\V)$ as a variety,
with the images of the elements of $A$ as new zeroary operations,
then the identities named will define a subvariety
$\V_0\subseteq(A\downarrow\V),$ on which $w(t)$ does
induce an extended inner endomorphism of $\r{id}_A,$
and hence an inner endomorphism of $A.$

The phrase ``inner endomorphism'' has in fact been used
in the theory of semigroups \cite{semigp}, \cite{invsemigp}
to describe some particular classes of what~\cite{terms} calls
termal endomorphisms.\vspace{.5em}

A different use of the phrase ``inner endomorphism'' has occasionally
been made in group theory.
Observe that if $G$ is a group, and $\alpha: G\to G$ is a set map which
in one or another sense can be ``approximated'' arbitrarily
closely by endomorphisms, then in general, $\alpha$ will
again be an endomorphism; but that if the approximating endomorphisms
are bijective, this does not force $\alpha$ to be bijective.
In such situations, if the approximating maps are inner
automorphisms, $\alpha$ has been called an ``inner
endomorphism'', preceded by some qualifying adverb.
Specifically, if one can find inner automorphisms of $G$ that
agree with $\alpha$ on a directed family of subgroups
having $G$ as union (though the conjugating elements need not belong
to the corresponding subgroups, so that $\alpha$ need not carry those
subgroups onto themselves), then $\alpha$ is called (in \cite{locally},
and \cite[p.\,201, starting in paragraph before Theorem~5.5.9]{sylow})
a ``locally inner endomorphism'',
while if $\alpha$ induces inner automorphisms on a
class of homomorphic images of $G$ that separates points, it
is called in~\cite{residually} a ``residually inner endomorphism''.
In the same spirit,
\cite{asymp} calls a topological limit of inner automorphisms of
a $\!C^*\!$-algebra an ``asymptotically inner endomorphism''
(a usage apparently unrelated to the sense of ``inner endomorphism''
of a $\!C^*\!$-algebra described above).

On a somewhat related theme,~\cite{AH} takes a finite-dimensional
associative unital algebra $A$ over a field $K,$
with $\!K\!$-vector-space basis $\{u_1,\dots,u_n\},$ forms an
extension field $K_0$ of $K$ by adjoining $n$ algebraically
independent elements, uses these as coefficients in forming
a ``generic'' element of the $\!K_0\!$-algebra $A\otimes K_0,$
and notes that this element will necessarily be invertible,
so that conjugation by it may be thought of
as a ``generic'' inner automorphism of $A.$
It is then noted that for certain elements $a\in A,$
the specialization of our indeterminates to the coefficients of the
$u_i$ in $a$ may turn the above conjugation map into a map that is
everywhere defined on $A,$ even if $a$ itself was not invertible.
(Intuitively, the map obtained
by that specialization is approximated by the operations of
conjugation by nearby invertible elements.)
The resulting maps are endomorphisms, but examples are
given showing that they may not be automorphisms, and they are named
``inner endomorphisms'' of $A.$

I don't see a direct relation between the concepts cited in the
last two paragraphs and those of this paper.
However, pondering the idea of~\cite{AH}, in which one
performs a conjugation $r\mapsto a\,r\,a^{-1}$ for which,
from the point of view of $A,$ the pair $(a,a^{-1})$
``doesn't quite exist'', helped lead me to
the example of the preceding section, in which a conjugating element
$t$ was put out of reach by multiplying
by a nonprincipal invertible ideal $J\subseteq K.$

I will note another use of ``inner'' in the literature, not restricted
to endomorphisms, at the end of \S\ref{S.deriv_def}.
\vspace{.5em}

We now return to inner endomorphisms in the sense of
Definition~\ref{D.inner}.

\section{Extended inner endomorphisms in other categories of algebras -- some easy observations.}\label{S.other}
We have examined extended inner endomorphisms in $\Gp$ and $\Rg_K.$
What about other categories of algebras?

In the category $\fb{Ab}$ of abelian groups (which we will write
additively), the result of adjoining a ``generic'' element $x$ to an
object $A$ is $A\oplus\langle x\rangle,$ each element $w(x)$ of which
has the form $a+nx$ for unique $a\in A$ and $n\in\mathbb{Z}.$
Clearly, the system of operations induced by this element
will respect the group operations of arbitrary objects
of $(A\downarrow\fb{Ab})$ if and only if $a=0;$
so here the general extended inner endomorphism is
given by multiplication by a fixed integer $n;$ it will be
an extended inner automorphism if and only if $n=\pm 1.$
These are very different from the extended inner
endomorphisms of the same group $A$ in the larger category $\Gp.$

Note that the above extended inner endomorphisms of $A$ do not
really depend on $A.$
Though we are looking at them
as endomorphisms of the forgetful functor
$(A\downarrow\fb{Ab})\to\fb{Ab},$ they are induced by
endomorphisms of the identity functor of $\fb{Ab}.$
We might call such operations {\em absolute} endomorphisms.

We can answer in the negative the analog
of Question~\ref{Q.group} with $\fb{Ab}$ in place of $\fb{Group}.$
Let $p$ be a prime, and let $A=Z_{p^\infty},$ the $\!p\!$-torsion
subgroup of $\mathbb{Q}/\mathbb{Z}.$
Recall that this abelian group is injective, that its nonzero
homomorphic images are all isomorphic to it, and that its endomorphism
ring is canonically isomorphic to the ring of $\!p\!$-adic integers.
It is easy to see that the action of each $\!p\!$-adic
integer $c$ on $A$ makes a commuting square with the
action of $c$ on every homomorphic image $f(A).$
Now if $f$ is a homomorphism of $A$ into any abelian group $B,$
the injectivity of $f(A)$ implies that $B$ can
be decomposed as $f(A)\oplus B_0;$ hence the action of $c$ on
$f(A)$ can be extended to an action on $B;$ e.g.,
by using the identity on $B_0.$
It follows that all the endomorphisms of $A$
(including its uncountably many automorphisms) have
the one-$\!B\!$-at-a-time extendibility property analogous to the
hypothesis of Question~\ref{Q.group}, though we have seen that only
those corresponding to multiplication by integers are inner,
as defined in Definition~\ref{D.inner}.
Hence in $\fb{Ab},$ the one-$\!B\!$-at-a-time
extendibility property is
strictly weaker than the functorial extendibility property
by which we have defined inner endomorphisms and automorphisms.

It would be interesting to investigate inner automorphisms
and endomorphisms in still other varieties of groups.\vspace{.5em}

In the category of commutative rings, it is not hard to verify
that $\mathbb{Z}$ has no nontrivial extended inner endomorphisms.
On the other hand, $\mathbb{Z}/p\<\mathbb{Z}$ has,
for every positive integer $n,$ the extended
inner endomorphism given by exponentiation by $p^n$
(the $\!n\!$-th power of the Frobenius map).
These endomorphisms are trivial on $\mathbb{Z}/p\<\mathbb{Z}$ itself;
but on every other integral domain of characteristic $p,$
the Frobenius map is a nontrivial inner endomorphism.\vspace{.5em}

If $A$ is an object of the variety of abelian
semigroups (written multiplicatively),
and $e$ an idempotent element of $A,$ then multiplication
by $e$ is an inner endomorphism;
the same is true in the category of nonunital commutative rings.
Similarly, if $D$ is an object of the category of distributive
lattices, then for any $a,b\in D,$
the operators $a\vee -,$ $b\wedge -,$ and
$a\vee(b\wedge -)$ are inner endomorphisms.

If $A$ is an object of the category of all semigroups (not
necessarily abelian), and
$e$ is a {\em central} idempotent of $A,$ then the word $w(x)=ex$
gives a {\em termal} endomorphism of $A$ in the sense of~\cite{terms}
(see preceding section), but not an inner endomorphism in our sense.
However, following the the idea noted in that section, if we
form the subvariety of $(A\downarrow\fb{Semigroup})$ defined by the
identity making the image of $e$ central, then $w(x)=ex$ does
determine an inner extended endomorphism in that category.
The analogous observations hold for nonunital commutative rings, and for
not necessarily distributive lattices.

\section{Derivations of associative algebras.}\label{S.d_assoc}

Alongside inner automorphisms of groups and rings, there is
another pair of cases where the modifier ``inner'' is classical:
inner {\em derivations} of associative and Lie algebras.
We shall examine the case of associative algebras in this section,
that of Lie algebras in \S\ref{S.Lie}.

If $K$ is a commutative ring and $R$ an object of $\Rg_K,$ we recall
that a {\em derivation} of $R$ as a $\!K\!$-algebra
means a set-map $d: R\to R$ satisfying
\begin{equation}\label{d.dr+s}
d(r+s)\ =\ dr\ +\ ds\quad (r,s\in R),
\end{equation}
\begin{equation}\label{d.dcr}
d(c\,r)\ =\ c\,dr\quad (c\in K,\ r\in R),
\end{equation}
\begin{equation}\label{d.drs}
d(r\,s)\ =\ d(r)\,s\ +\ r\,d(s)\quad (r,s\in R).
\end{equation}

In particular, for every $t\in R,$ the map $d$ defined by
\begin{equation}\label{d.inner_der}
d\,r\ =\ t\,r-r\,t
\end{equation}
is a derivation of $R,$
called the {\em inner derivation} induced by $t,$
and written $tr-rt=[t,\,r].$

Such an inner derivation $d$ clearly has the analog of the property
of inner automorphisms of groups which we abstracted in
Definition~\ref{D.inner}; namely, that to every $f:R\to S$ in
$(R\downarrow\Rg_K)$ we can associate a derivation
$d_f$ of $S,$ in such a way that
\begin{equation}\label{d.d_id}
d_{\r{id}_R}\ =\ d,
\end{equation}
and that given two objects $f_i:R\to S_i$ $(i=1,2)$
of $(R\downarrow\Rg_K)$ and a morphism $h:S_1\to S_2$ in that
category, we have
\begin{equation}\label{d.dfunct}
d_{f_2}\,h\ =\ h\,d_{f_1}.
\end{equation}

What about the converse?
Given a system of derivations $d_f$ satisfying~(\ref{d.dfunct}), let us,
as in our investigation of automorphisms and endomorphisms,
look at their action on a generic element.
Let $\eta:R\to R\langle x\rangle$ be the natural inclusion and
write $d_\eta(x)=w(x)\in R\langle x\rangle.$
As before,~(\ref{d.dr+s}) implies that
\begin{equation}\label{d.dx=sum}
w(x)\ =\ \sum_1^n a_i\,x\,b_i
\end{equation}
for some $a_1,\dots,a_n,\ b_1,\dots,b_n\in R,$ and conversely,
this condition implies both~(\ref{d.dr+s}) and~(\ref{d.dcr}).
To handle~(\ref{d.drs}), we need, as before, an additional
assumption; but this time we can get away with much less than
$K$ being a field.
Let us merely assume that the canonical map $K\to R$ makes $K$ a
$\!K\!$-module direct summand in $R;$ i.e., that there exists a
$\!K\!$-module-theoretic left inverse $\varphi:R\to K$ to that map.
Given such a $\varphi,$ it is not hard to see that we can obtain
from~(\ref{d.dx=sum}) an equation of the same form
(possibly with $n$ increased by~$1)$ in which $a_1=1,$
while $a_2,\dots,a_n\in\r{Ker(\varphi)}.$
So let us assume that~(\ref{d.dx=sum}) has those properties.

Let us now take the generic instance of~(\ref{d.drs}),
namely, in $R\langle x_0,\,x_1\rangle,$ the equation
\begin{equation}\label{d.dx0x1}
\sum_1^n a_i\,x_0\,x_1\,b_i\ =
\ (\sum_1^n a_i\,x_0\,b_i)\ x_1\ +\ x_0\ (\sum_1^n a_i\,x_1\,b_i)\,.
\end{equation}
The two sides of this equation lie in
$R\,x_0\,R\,x_1\,R\cong R\otimes R\otimes R.$
Let us apply $\varphi$ to the leftmost of the three tensor factors,
getting an equation in $x_0\,R\,x_1\,R,$
and take the right coefficient of $x_0$ therein.
This is an equation in $R\otimes R\cong R\,x_1\,R,$ namely
\begin{equation}\label{d.pre-d_inner}
x_1\,b_1\ =\ b_1\,x_1\ +\ \sum_1^n a_i\,x_1\,b_i\,.
\end{equation}
Solving for the summation, which is $w(x_1),$
and writing $x$ in place of $x_1,$ we get
\begin{equation}\label{d.d_inner}
w(x)\ =\ x\,b_1\ -\ b_1\,x\,.
\end{equation}

Hence, each map $d_f$ is the inner derivation,
in the classical sense, determined by $f(b_1).$
We summarize this result below.
The ``unique up to \dots'' assertion in the conclusion is obtained by
noting that an element $b\in R$ satisfies $b\otimes 1-1\otimes b=0$ in
$R\otimes R=(K\oplus\r{Ker}(\varphi))\otimes(K\oplus\r{Ker}(\varphi))$
if and only if the component of $b$
in $\r{Ker}(\varphi)$ is $0;$ i.e., if and only if $b\in K.$

\begin{theorem}\label{T.d_inner}
Let $K$ be a commutative ring and $R$ a $\!K\!$-algebra, and
suppose we have a function associating to every $f:R\to S$
in $(R\downarrow\Rg_K)$ a derivation $d_f$ of the $\!K\!$-algebra $S,$
such that\textup{~(\ref{d.dfunct})} holds for every morphism
$h$ of $(R\downarrow\Rg_K).$
Suppose also that the canonical map $K\to R$ has
a $\!K\!$-module-theoretic left inverse.

Then there exists $b\in R,$
unique up to an additive constant from $K,$ such that for each
$f:R\to S,$ $d_f$ is the inner derivation of $S$ induced by $f(b).$\qed
\end{theorem}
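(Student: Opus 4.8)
The plan is to mimic the structure used for automorphisms and endomorphisms in Theorems~\ref{T.in_aut_G} and~\ref{T.end_R-Rg}, applying it to the derivation axioms~(\ref{d.dr+s})--(\ref{d.drs}). First I would observe, exactly as in the earlier proofs, that a coherent system $(d_f)$ satisfying~(\ref{d.dfunct}) is completely determined by the single element $w(x) = d_\eta(x) \in R\langle x\rangle$, where $\eta\colon R\to R\langle x\rangle$ is the inclusion of $R$ into the algebra obtained by freely adjoining one generic element: for any $f\colon R\to S$ and any $t\in S$, the universal property of $R\langle x\rangle$ gives a unique $h$ with $h(x)=t$, and~(\ref{d.dfunct}) then forces $d_f(t) = w_f(t)$, where $w_f$ is the result of substituting the $f$-images of the coefficients from $R$. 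Conversely any $w(x)$ induces such a coherent system of set-maps, so the whole problem reduces to determining for which $w(x)$ the induced maps $S\to S$ are $K$-derivations of every object $f\colon R\to S$ of $(R\downarrow\Rg_K)$.

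Next I would translate the three derivation axioms into conditions on $w(x)$, testing each against the appropriate generic situation. Additivity~(\ref{d.dr+s}) and $K$-linearity~(\ref{d.dcr}) are handled by substituting $x_0+x_1$ for $x$ in $R\langle x_0,x_1\rangle$: by the argument already used for~(\ref{d.wx0+x1}), the only $w(x)$ respecting addition in the generic case are those homogeneous of degree~$1$, i.e.\ $w(x) = \sum_1^n a_i\,x\,b_i$ for some $a_i,b_i\in R$, and such a form automatically respects $K$-linearity as well (this is precisely~(\ref{d.dx=sum}) in the excerpt). Now I would invoke the hypothesis that $K\to R$ splits as $K$-modules, via $\varphi\colon R\to K$: using $\varphi$ I can normalise the expression so that $a_1 = 1$ and $a_2,\dots,a_n\in\operatorname{Ker}\varphi$, at the cost of possibly enlarging $n$ by one. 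The key remaining step is to impose the Leibniz rule~(\ref{d.drs}) in its generic form~(\ref{d.dx0x1}) inside $R\,x_0\,R\,x_1\,R \cong R\otimes R\otimes R$. I would apply $\varphi$ to the leftmost tensor factor and extract the right-coefficient of $x_0$, arriving at the equation~(\ref{d.pre-d_inner}), $x_1 b_1 = b_1 x_1 + \sum_1^n a_i\,x_1\,b_i$; solving for the sum (which is $w(x_1)$) gives $w(x) = x\,b_1 - b_1\,x$. Hence each $d_f$ is the classical inner derivation induced by $f(b_1)$, and setting $b = b_1$ proves the existence part.

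For the uniqueness-up-to-$K$ clause, I would note that two elements $b,b'\in R$ induce the same extended inner derivation precisely when the induced generic elements $x\,b - b\,x$ and $x\,b' - b'\,x$ coincide in $R\langle x\rangle$, i.e.\ when $b - b'$ maps to $0$ under the map $R\to R\otimes R$, $c\mapsto c\otimes 1 - 1\otimes c$ (after identifying $R\,x\,R\cong R\otimes R$). Using the splitting $R = K\oplus\operatorname{Ker}\varphi$, one has $R\otimes R = (K\oplus\operatorname{Ker}\varphi)\otimes(K\oplus\operatorname{Ker}\varphi)$, and a short computation shows $c\otimes 1 - 1\otimes c = 0$ if and only if the $\operatorname{Ker}\varphi$-component of $c$ vanishes, i.e.\ $c\in K$. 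Thus $b$ is determined up to an additive element of $K$, as claimed.

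The main obstacle — or at least the step requiring the most care — is the passage from~(\ref{d.dx0x1}) to~(\ref{d.pre-d_inner}): one must correctly identify which summand of $R\otimes R\otimes R$ to project onto, verify that applying $\varphi$ to the leftmost factor and then reading off the right-coefficient of $x_0$ is a well-defined $K$-linear operation that kills no information we need, and check that the normalisation $a_1=1$, $a_i\in\operatorname{Ker}\varphi$ for $i\geq 2$ is exactly what makes the unwanted terms on the right-hand side drop out (the term $a_1 x_0 b_1$ contributes the $b_1 x_1$ on the right of~(\ref{d.pre-d_inner}), while the $a_i x_0 b_i$ with $i\geq 2$ die under $\varphi$). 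Everything else is a routine adaptation of the coproduct bookkeeping already carried out for automorphisms and endomorphisms.
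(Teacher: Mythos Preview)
Your proposal is correct and follows the paper's own argument essentially step for step: the reduction to a generic element $w(x)\in R\langle x\rangle$, the degree-one conclusion from additivity, the normalisation $a_1=1,\ a_i\in\operatorname{Ker}\varphi$ via the splitting, the extraction of~(\ref{d.pre-d_inner}) by applying $\varphi$ to the left tensor factor of~(\ref{d.dx0x1}), and the uniqueness computation in $(K\oplus\operatorname{Ker}\varphi)^{\otimes 2}$ are all exactly what the paper does. Your closing paragraph even anticipates the one subtlety the paper leaves implicit, namely why the normalisation of the $a_i$ is precisely what is needed for the $\varphi$-projection to isolate the desired terms.
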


We can push this a bit further.
Instead of assuming that the canonical map $K\to R$ has
a $\!K\!$-module left inverse, assume the $\!K\!$-algebra
structure on $R$ extends to a $\!K'\!$-algebra structure for
some epimorph $K'$ of $K$ in the category of commutative rings,
and that the map of $K'$ into $R$ has a $\!K'\!$-module left inverse.
(This is the same as a $\!K\!$-module left inverse to the latter map.
On the other hand, when the epimorphism $K\to K'$ is not an isomorphism,
the map of $K$ itself into $R$ cannot
have a $\!K\!$-module left inverse.)
Then we can apply the above theorem to $R$ as a $\!K'\!$-algebra;
moreover, it is not hard to show that
$(R\downarrow\Rg_K)\cong(R\downarrow\Rg_{K'});$ so the
characterization by Theorem~\ref{T.d_inner}
of such systems of derivations parametrized
by $(R\downarrow\Rg_{K'})$ gives the same result for systems
of derivations parametrized by $(R\downarrow\Rg_K).$
Note, however, that the element inducing the system
will be unique up to an additive constant in $K',$ rather than in $K.$

I know of no example showing the need for any version
of the module-theoretic hypothesis of Theorem~\ref{T.d_inner}
for the existence half of the conclusion.
So we ask

\begin{question}\label{Q.d_inner}
If $K$ is a commutative ring and $R$ an associative unital
$\!K\!$-algebra, must every system of derivations $(d_f)$
satisfying\textup{~(\ref{d.dfunct})}
be induced, as above, by an element $b\in R$?
\end{question}

\section{How should one define ``extended inner derivation''?}\label{S.deriv_def}
We would have stated Theorem~\ref{T.d_inner}
and Question~\ref{Q.d_inner} in terms of
``extended inner derivations of $R$'', if it were clear
how to define this concept.
We could, of course, make an ad hoc definition of the phrase,
as a system of derivations $d_f$ satisfying the hypothesis
of those statements; but it would be better if we could make it an
instance of a general use of ``extended inner ---\,''.
Derivations are not, in an obvious way, morphisms in a category,
so we cannot use Definition~\ref{D.inner}.
Below, we will note several ways that derivations can be put in a
more general context, then take the one that seems
best as the basis of our definition.\vspace{.5em}

Recall first that there is a well-known characterization of
derivations in terms of algebra homomorphisms.
If $R$ is a $\!K\!$-algebra, let $I(R)$ denote the $\!K\!$-algebra
obtained by adjoining to $R$ a central, square-zero element
$\varepsilon$ (intuitively, an infinitesimal.
Formally, $I$ can be described as the functor of tensoring over
$K$ with $K[\,\varepsilon\mid\nolinebreak\varepsilon^2=0\,].)$
Then it is easy to verify that a set-map $d:R\to R$ is a derivation
if and only if the map $R\to I(R)$ given by
\begin{equation}\label{d.r+*edr}
r\ \mapsto\ r+\varepsilon\,d(r)
\end{equation}
is a $\!K\!$-algebra homomorphism.
Under this correspondence, the {\em inner} derivations,
in the classical sense, correspond
to the homomorphisms obtained by composing the
inclusion $R\to I(R)$ with conjugation by a unit
of the form $1+\varepsilon\,b$ $(b\in R).$

Using this characterization of derivations, we could put our condition
on families of derivations $d_f$ into category-theoretic language.
But I don't see the resulting framework as fitting a
natural wider class of constructions.\vspace{.5em}

A second approach begins by asking, ``Since
the common value of the two sides of~(\ref{d.dfunct})
is neither a derivation of $S_1,$ nor a derivation of $S_2,$
nor a ring homomorphism, what is it?''
It is, in fact, what is known as ``a derivation from $S_1$ to $S_2$
with respect to the homomorphism $h:S_1\to S_2$'';
i.e., a set-map $d:S_1\to S_2$ which satisfies (\ref{d.dr+s}),
(\ref{d.dcr}), and the generalization of~(\ref{d.drs}),
\begin{equation}\label{d.drs_g}
d(r\,s)\ =\ d(r)\,h(s)\ +\ h(r)\,d(s).
\end{equation}

If, now, for every pair of $\!K\!$-algebras $S_1,\ S_2,$
we let $D(S_1,\,S_2)$ denote the set of all pairs $(h,d),$
where $h:S_1\to S_2$ is a $\!K\!$-algebra homomorphism and
$d:S_1\to S_2$ a derivation with respect to $h$ in the
above sense, then $D(-,-)$
becomes a bifunctor $(\Rg_K)^{\r{op}}\times\Rg_K\to\fb{Set},$
having a forgetful morphism $(h,d)\mapsto h$ to the bivariant
hom functor $\r{Hom}: (\Rg_K)^\r{op}\times\Rg_K\to\fb{Set}.$
The set of derivations of a single $\!K\!$-algebra $S$ is
the inverse image of the identity endomorphism
of $S$ under this forgetful map.

Again, however, I don't know of a natural class of constructions
wider than the derivations to which these observations generalize.
Moreover, the concept of an $\!h\!$-derivation $d:S_1\to S_2$
for $h$ a ring homomorphism is in turn a special case of that
of an $\!(h,h')\!$-derivation, for $h,h':S_1\to S_2$
two homomorphisms; such a derivation is a
map satisfying (\ref{d.dr+s}), (\ref{d.dcr}), and
\begin{equation}\label{d.drs_gh}
d(r\,s)\ =\ d(r)\,h'(s)\ +\ h(r)d(s).
\end{equation}
In this context, we again have the concept
of the inner derivation induced
by an element $b\in S_2,$ namely the operation
\begin{equation}\label{d.inner_der_gh}
d\,r\ =\ b\,h'(r)\ -\ h(r)\,b.
\end{equation}
How this generalization might relate to our concept of ``extended inner
derivation'' is not clear.\vspace{.5em}

A third approach is to start with any variety $\V$
of algebras in the sense of universal algebra
(i.e., the class of all sets given with a family of operations
of specified arities, satisfying a specified set of identities
\cite[Chapter~8]{245}),
and suppose that we are interested in endomaps $m$ of the
underlying sets of objects of $\V$ which satisfy a
certain set of identities in the operations of $\V$ and
the inputs and outputs of $m.$
(E.g., if the variety is $\Rg_K$
and the maps are the derivations, the
identities are (\ref{d.dr+s}), (\ref{d.dcr}) and (\ref{d.drs}).)
For every $A\in\V,$ let $M(A)$ denote the set of all
such maps, and let us call these the $\!M\!$-maps of $A.$
Then we may define an {\em extended inner} $\!M\!$-map of $A$
as a way of associating to each $f:A\to B$ in $(A\downarrow \V)$
an $m_f\in M(B),$ so as to satisfy the analog of~(\ref{d.dfunct}).
If we look at the action of such an extended inner $\!M\!$-map
on a generic element, namely, the element $x\in A\langle x\rangle,$
and call its image $w(x)\in A\langle x\rangle,$
we again find that $w(x)$
determines the whole extended inner $\!M\!$-map;
so we can study such maps by examining such elements.
(The same observations apply, with obvious modifications, if one is
interested in associating to each $f:A\to B$ an indexed {\em family}
of operations on $B,$ each of a specified {\em arity}, satisfying a set
of identities relating them with each other and the operations of $B.$
Then each operation of arity $n$ in the family would have a generic
instance in $A\langle x_1,\dots,x_n\rangle.$
We shall say a little more about this in \S\ref{S.concl},
but will stick to the case of a single unary operation here.)
Formalizing, we have

\begin{definition}\label{D.M}
Suppose we are given a variety $\V$ of algebras in the sense
of universal algebra, and a class of
set-maps $m$ of members $A$ of $\V$ into themselves, which consists of
all set-maps $A\to A$ that satisfy a certain family of identities in
the operations of $\V,$ and which we call ``$M\!$-maps''.
Then an {\em extended inner} $\!M\!$-map of an object $A$
of $\V$ will mean a function associating to every
object $f:A\to B$ of $(A\downarrow \V)$
an $\!M\!$-map $m_f$ of $B,$ such that
for every morphism $h:B_1\to B_2$ in $(A\downarrow\V),$ one has
\begin{equation}\label{d.mfunct}
m_{f_2}\,h\ =\ h\,m_{f_1}.
\end{equation}
\end{definition}

Clearly the concept of an {\em inner endomorphism} of an object of a
general category $\fb{C}$ given by Definition~\ref{D.inner}, when
restricted to the case where $\fb{C}$ is a variety $\V$ of algebras,
agrees with the above definition.
(The inner {\em auto\/}morphisms of an object of a variety are
then the inner endomorphisms $(\beta_f)$ such that
all $\beta_f$ are invertible.)
On the other hand,
systems of maps as in the hypothesis of Theorem~\ref{T.d_inner} can
now, as desired, be described as the {\em extended inner derivations}
of our associative algebra $R.$
In the next section we shall similarly consider extended inner
derivations of Lie algebras.
\vspace{.5em}

Digression:  Having talked about several versions of the concept of a
derivation, let me for completeness recall two more,
though I will not discuss ``extended inner'' versions of these.

The most general of the versions mentioned above, that of
an $\!(h,h')\!$-derivation, is generalized further by the concept of
a derivation from a $\!K\!$-algebra $S$ to an $\!(S,S)\!$-bimodule $B,$
formally defined by our original formulas (\ref{d.dr+s}),
(\ref{d.dcr}) and~(\ref{d.drs}).
In the last of these formulas, and in the analog of~(\ref{d.inner_der})
defining the concept of an {\em inner} derivation $S\to B,$
the ``multiplication'' on the right-hand
sides of these equations is taken to be that of the bimodule structure.
(Thus,~(\ref{d.drs_gh}) and~(\ref{d.inner_der_gh}) are the
cases of~(\ref{d.drs}) and~(\ref{d.inner_der})
where $S_2$ is made an $\!(S_1,S_1)\!$-bimodule
by letting $S_1$ act on the left via the images of its elements
under $h,$ and on the right via their images under $h'.)$
Such a derivation is equivalent to a homomorphism $S\to S\oplus B,$
where $S\oplus B$ is made a $\!K\!$-algebra under a multiplication
based on the multiplication of $S,$ the bimodule structure of
$B,$ and the trivial internal multiplication of $B.$
Each {\em inner} derivation $S\to B$ corresponds to
conjugation by a unit of the form $1+b$ $(b\in B).$

Finally,
in group theory, one sometimes speaks of a left or right derivation
$d$ from a group $G$ to a group $N$ on which $G$ acts by automorphisms.
If we write the action of $G$ on $N$ as left superscripts in
the case of left derivations, and as right superscripts in the case
of right derivations (requiring it to be a left action in
the former case and right action in the latter), and denote
the group operation of $N$ by ``$\cdot$'' (to avoid confusion as to
which elements such superscripts are attached to), then the identities
characterizing these two sorts of maps are
\begin{equation}\label{d.left_rt_der}
d(a\,b)\ =\ d(a)\cdot{^a d(b)},\qquad\mbox{respectively,}\qquad
d(a\,b)\ =\ d(a)^b\cdot d(b).
\end{equation}

In the special case where $N$ is abelian, this concept of derivation
can be reduced to the preceding one.
Indeed, note that a left or right action of $G$
on $N$ is equivalent to a structure on $N$ of left or right
module over the group ring $\mathbb{Z}\,G.$
To supply an action on the other side, and so make $N$
a $\!(\mathbb{Z}\,G,\,\mathbb{Z}\,G)\!$-bimodule,
we map $\mathbb{Z}\,G$ to $\mathbb{Z}$ by the augmentation map,
then use the unique action of $\mathbb{Z}$ on any abelian group.
A derivation $G\to N$ in the sense of~(\ref{d.left_rt_der})
(written now with ``$+$'' instead of ``$\cdot$'') is then a derivation
from the ring $R=\mathbb{Z}\,G$ to its bimodule~$N.$
\vspace{.5em}

Returning to ring theory, let me note yet another way
``inner'' has been used, possibly related to that of this note.
Automorphisms and derivations of a $\!K\!$-algebra $R$ are
{\em actions} on $R$ of certain Hopf algebras, and students of Hopf
algebras have defined what it means for an action of an arbitrary Hopf
algebra on an algebra to be inner \cite{hopf} \cite{SM} \cite{sweedler}.
I am out of my depth in this situation, and do not know
how close that concept is to the concepts of
inner automorphisms and derivations defined here; but
it appears to me that it would be difficult to
embrace under the action of a single Hopf algebra
(or bialgebra) the class of constructions~(\ref{d.S>M_n>S})
with $n$ ranging over all positive integers.

As noted in \cite{hopf}, another case of an inner action of a Hopf
algebra on an algebra $R$ gives us a concept of
an inner {\em grading} of $R$ by a given group or monoid.
It would be interesting to explore the concept of
an ``extended inner grading''.

\section{Inner derivations and inner endomorphisms of Lie algebras.}\label{S.Lie}

Let $K$ be a commutative ring, and $\fb{Lie}_K$ the variety
of Lie algebras over $K.$
Derivations $d:L\to L$
are defined for Lie algebras as for associative algebras,
with~(\ref{d.dr+s}) and~(\ref{d.dcr}) unchanged, and with Lie brackets
replacing multiplication in the analog of~(\ref{d.drs}):
\begin{equation}\label{d.d[rs]}
d([r,\,s])\ =\ [d(r),\,s]\ +\ [r,\,d(s)].
\end{equation}

The derivations of any Lie algebra $L$ or associative algebra $R$
themselves form a Lie algebra under {\em commutator brackets}.
For $L$ a Lie algebra,
there is a natural homomorphism from $L$ to its Lie algebra of
derivations, called the {\em adjoint map}, taking
each $s\in L$ to the derivation $\r{ad}_s: L\to L$ given by
\begin{equation}\label{d.ad}
\r{ad}_s(t)\ =\ [s,\,t]\quad (t\in L).
\end{equation}
For each $s,$ $\r{ad}_s$ is called
the {\em inner derivation} of $L$ determined by $s.$

Clearly, each $s\in L$ induces in this way an {\em extended inner
derivation} of $L$ in the sense of Definition~\ref{D.M}.
To investigate whether these are the only extended
inner derivations, let $B:\Rg_K\to\fb{Lie}_K$ be the functor
that sends each associative $\!K\!$-algebra $R$ to the Lie
algebra having the same underlying
$\!K\!$-module as $R,$ and Lie brackets given by commutator brackets,
\begin{equation}\label{d.B}
[s,\,t]\ =\ s\,t-t\,s.
\end{equation}
We recall that the functor $B$ has a left adjoint,
the {\em universal enveloping algebra} functor $E: \fb{Lie}_K\to\Rg_K.$
(The Poincar\'e-Birkhoff-Witt
Theorem tells us, inter alia, that if $L$ is a Lie algebra over
a field, then the natural map $L\to B(E(L))$ is an embedding.)

Now suppose $(d_f)$ is an extended inner derivation of $L.$
The adjointness relation between $B$ and $E$ tells us that
for $S$ an associative $\!K\!$-algebra, homomorphisms $E(L)\to S$
as associative algebras correspond to Lie homomorphisms $L\to B(S);$
hence if we apply our extended inner derivation to the
latter homomorphisms, we get for every object
$f:E(L)\to S$ of $(E(L)\downarrow\Rg_K)$ a derivation of the Lie
algebra $B(S),$ in a functorial manner.
The condition of being a derivation of $B(S)$ as a Lie algebra
is weaker than that of
being a derivation of $S$ as an associative algebra, so we can't
apply Theorem~\ref{T.d_inner} directly to this family of derivations.
The family will,
however, by the same arguments as before, be determined by an element
$w(x)\in E(L)\langle x\rangle;$ and will satisfy~(\ref{d.dr+s})
and~(\ref{d.dcr}), which we have seen are equivalent to $w(x)$ having
the form $\sum a_i\,x\,b_i,$ with $a_i,\,b_i$ now taken from $E(L).$
The difference between the associative case and the Lie case
rears its head in the equation saying that our
induced maps satisfy~(\ref{d.d[rs]}).
This involves commutator brackets in $E(L)\langle x_0, x_1\rangle$
in place of its associative multiplication;
thus, instead of~(\ref{d.dx0x1}) we get
\begin{equation}\begin{minipage}[c]{35pc}\label{d.d[x0x1]}
$\sum_1^n a_i(x_0\,x_1-x_1\,x_0)b_i\\[.4em]
\hspace*{1in} =\ %
((\sum_1^n a_i\,x_0\,b_i)\,x_1\,-\,x_1\,(\sum_1^n a_i\,x_0\,b_i))\ +
\ (x_0\,(\sum_1^n a_i\,x_1\,b_i)\,-\,(\sum_1^n a_i\,x_1\,b_i)\,x_0).$
\end{minipage}\end{equation}
The added complexity is illusory, however!
Writing $R$ for $E(L),$ note that the terms
of~(\ref{d.d[x0x1]}) lie in the direct
sum of two components $R\,x_0\,R\,x_1\,R\,\oplus\,R\,x_1\,R\,x_0\,R
\,\subseteq\,R\langle x_0, x_1\rangle.$
If we project~(\ref{d.d[x0x1]})
onto the first of these, we get precisely~(\ref{d.dx0x1}), and we can
repeat the computations that led us to Theorem~\ref{T.d_inner}.
A left inverse to the canonical map $K\to E(L),$ as needed for
the proof of that theorem, is supplied by the algebra
homomorphism
\begin{equation}\label{d.E->K}
E(L)\ \to\ K
\end{equation}
that we get on applying $E$ to the trivial map $L\to\{0\}.$

What those computations now tell us is that there is a $b\in E(L)$
such that $w(x)=x\,b-b\,x,$ so that for an object $f:E(L)\to S$
of $(E(L)\downarrow\Rg_k),$ the induced derivation on $B(S)$ is given
by the operation of commutator bracket with $f(b).$
(This shows, incidentally, that that derivation of the Lie algebra
$B(S)$ is in fact a derivation of the associative algebra $S.)$
If we now assume that $K$ is a field, so that every Lie algebra
$M$ can be identified with its image in $E(M),$ we see that
given any Lie algebra homomorphism $f:L\to M,$
the resulting derivation $d_f:M\to M$ can be described within $E(M)$ as
commutator brackets with $f(b).$
(Here we are using the fact
that by the functoriality of our extended inner derivation,
its behavior on $M$ is the restriction of its behavior on $B(E(M)).)$
Also, since elements of $K$ induce the zero derivation, we can assume
without loss of generality that the constant term of $b$ (its
image under~(\ref{d.E->K})) is zero.

This reduces our problem to the question: what elements $b\in E(L)$
with constant term $0$
have the property that for every $f:L\to M,$ the operation of
commutator brackets with
the image of $b$ in $E(M)$ carries $M\subseteq E(M)$ into itself?
Equivalently, what elements $b$ with constant term
zero have the property that the element
$w(x)=x\,b-b\,x\in E(L)\langle x\rangle$ lies in the Lie subalgebra
of $E(L)\langle x\rangle$ generated by $L$ and $x$?
Clearly, all $b\in L$ have this property.
Are they the only ones?

If the field $K$ has positive characteristic $p,$ the answer is no.
It is known that in this case the $\!p\!$-th power of a derivation
of a Lie or associative algebra is again a derivation, and in
particular, that the $\!p\!$-th power of the inner derivation of
an associative algebra determined by an element $a$ is the
inner derivation determined by $a^p.$
(This, despite the fact that the $\!p\!$-th power map does not
in general respect addition on noncommutative $\!K\!$-algebras.)
For nonzero $a\in L\subseteq E(L),$ the element $a^p\in E(L)$ will
not lie in $L;$ so commutator brackets with such elements give
extended inner derivations of $L$ that do not come from
inner derivations in the traditional sense.

The abovementioned fact about $\!p\!$-th powers
of derivations in characteristic $p$ leads to the concept
of a {\em $\!p\!$-Lie algebra} (or {\em restricted} Lie algebra of
characteristic $p$ \cite[\S V.7]{Lie}): a Lie algebra $L$ over a field
of characteristic $p$ with an additional operation of ``formal $p\!$-th
power'', $a\mapsto a^{[p]},$ satisfying appropriate identities.
For this class of objects, one has a ``restricted universal enveloping
algebra'' construction, $E_p(L),$ where relations
are imposed making the formal $\!p\!$-th powers of elements
in the $\!p\!$-Lie algebra coincide with their
ordinary $\!p\!$-th powers in the enveloping algebra.
As we shall note below, this leads to a modified version
in characteristic~$p$ of the question whose unmodified
form we just answered in the negative.

When $K$ has characteristic $0,$
I suspect that a Lie algebra $L$ has no extended
inner derivations other than those induced by elements $b\in L.$
(If there existed general constructions in this case, like
the $\!p\!$-th power operator in the characteristic-$\!p\!$ case,
one would expect the phenomenon to be well-known!)
In any case, we ask

\begin{question}\label{Q.Lie}
If $L$ is a Lie algebra over a field of characteristic $0,$
can commutator brackets with elements $b\in E(L)$ of constant
term zero, other than elements
of $L,$ induce extended inner derivations of $L$?

Same question for $L$ a $\!p\!$-Lie algebra over a field of
characteristic $p>0,$ and $b\in E_p(L).$

These are equivalent to the questions of whether there can exist
in $E(L)$ \textup{(}respectively in $E_p(L))$
elements $b$ of constant term zero
not lying in $L,$ with the property that the element
$w(x)=x\,b-b\,x$ belongs to the Lie subalgebra \textup{(}respectively
the $\!p\!$-Lie subalgebra\textup{)} of $E(L)\langle x\rangle$
\textup{(}respectively $E_p(L)\langle x\rangle)$
generated by $L$ and~$x.$
\end{question}

Just as we have used, above, our analysis of extended inner derivations
on associative algebras in studying extended inner derivations on
Lie algebras, so we can do the same for extended inner endomorphisms
of Lie algebras, again assuming $K$ a field.
If we copy the development of Theorem~\ref{T.end_R-Rg}, taking
$R=E(L),$ and using commutator brackets in place of products,
we can again get from an extended inner endomorphism of a Lie
algebra $L$ an element $w(x)\in E(L)\langle x\rangle,$ which
we find will have the form $\sum a_i\,x\,b_i$
for $a_i,\,b_i\in E(L);$ and the map it induces will
respect commutator brackets on objects of $(E(L)\downarrow\Rg_K).$
That property is equivalent to a formula like~(\ref{d.sumsum}), but
with components in both $R\,x_0\,R\,x_1\,R$ and $R\,x_1\,R\,x_0\,R.$
Again, projection onto the $R\,x_0\,R\,x_1\,R$ component
gives us precisely our old formula, in this case~(\ref{d.sumsum}).
As in \S\ref{S.ring}, this yields~(\ref{d.*dij}).

However, homomorphisms of Lie algebras satisfy no analog of the
condition of sending $1$ to $1;$ so we do not have~(\ref{d.1|->1}),
and cannot deduce~(\ref{d.sum=1}).
What does~(\ref{d.*dij}) tell us without~(\ref{d.sum=1})?
It says that the identity endomorphism of the
free right $\!E(L)\!$-module of dimension $n$ factors
through the free right $\!E(L)\!$-module of dimension~$1.$

Now $E(L)$ admits a homomorphism to the field $K,$
namely~(\ref{d.E->K}),
so such a factorization of maps of free modules can only exist if
such a factorization exists for modules over $K,$ i.e., if $n\leq 1.$
If $n=0$ then $w(x)=0,$ and in contrast to the
case of unital associative rings, this indeed corresponds
to an inner endomorphism of $L$ in $\fb{Lie}_K.$
If $n=1,$ then~(\ref{d.*dij}) becomes $b_1 a_1=1.$
From the fact that the $\!K\!$-algebra $E(L)$ has a filtration
whose associated graded ring is a polynomial ring over $K,$
it follows that, like a polynomial ring, it has no
$\!1\!$-sided invertible elements other than the nonzero elements
of $K;$ so $a_1,\,b_1\in K,$ and we conclude that $w(x)=x.$
Hence,

\begin{theorem}\label{T.end_L}
If $L$ is a Lie algebra over a field $K,$ then its only
extended inner endomorphisms are the zero endomorphism
and the identity automorphism.\qed
\end{theorem}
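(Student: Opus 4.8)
The plan is to imitate the proof of Theorem~\ref{T.end_R-Rg} with $R=E(L)$ the universal enveloping algebra of $L,$ replacing associative products by commutator brackets throughout, and then to isolate the one way the Lie case differs: there is no unit-preservation condition to impose. Concretely, I would first reduce to the associative setting. By the adjunction between $E$ and the commutator functor $B,$ giving a Lie homomorphism $L\to B(S)$ is the same as giving an associative homomorphism $E(L)\to S,$ so an extended inner endomorphism of $L$ assigns functorially, to each object $f:R\to S$ of $(R\downarrow\Rg_K),$ an endomorphism of the Lie algebra $B(S).$ Exactly as in \S\ref{S.ring}, this family is determined by the single generic element $w(x)\in R\langle x\rangle,$ and additivity together with $\!K\!$-linearity force $w(x)=\sum_{i=1}^n a_i\,x\,b_i$ for some $a_i,b_i\in R;$ conversely any such $w(x)$ induces a $\!K\!$-linear, functorial system of set maps.

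Next I would extract the multiplicative condition. Writing out that the induced maps respect commutator brackets gives an identity in $R\langle x_0,x_1\rangle$ all of whose terms lie in the two summands $R\,x_0 R x_1 R\oplus R\,x_1 R x_0 R;$ projecting onto the first of these reproduces verbatim the associative equation~(\ref{d.sumsum}). Choosing the expression for $w(x)$ with $n$ minimal, so that (since $K$ is a field) $a_1,\dots,a_n$ and $b_1,\dots,b_n$ are each $\!K\!$-linearly independent, the computation of \S\ref{S.ring} then yields the relations~(\ref{d.*dij}), $b_j a_k=\delta_{jk}.$ Viewing $a$ as a row vector and $b$ as a column vector over $R,$ this says $b\,a=I_n,$ i.e.\ the identity endomorphism of the free right $\!R\!$-module $R^n$ factors through $R^1.$

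Here the Lie case parts company with the associative one: a Lie homomorphism has no analog of ``$1\mapsto 1$'', so I do \emph{not} obtain equation~(\ref{d.sum=1}) and hence cannot also conclude $a\,b=I_1.$ To pin down $n$ I would push the relation $b\,a=I_n$ through the augmentation $E(L)\to K$ of~(\ref{d.E->K}): over the field $K$ the identity of $K^n$ would factor through $K^1,$ forcing $n\le 1.$ If $n=0$ then $w(x)=0;$ unlike in $\Rg_K,$ the zero map is a legitimate Lie-algebra endomorphism, so this gives the zero endomorphism of $L.$ If $n=1,$ then~(\ref{d.*dij}) becomes $b_1 a_1=1,$ a one-sided inverse in $E(L);$ but by the Poincar\'e-Birkhoff-Witt theorem $E(L)$ carries a filtration whose associated graded ring is a polynomial ring over $K,$ a domain, so comparing leading terms in this filtration forces $a_1$ and $b_1$ to be (nonzero) scalars, whence $w(x)=x$ and we obtain the identity automorphism.

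The only real obstacle is the middle step: seeing that the bracket identity splits cleanly across the two components $R\,x_0 R x_1 R$ and $R\,x_1 R x_0 R,$ so that one projection returns the associative computation unchanged, while staying alert that the \emph{second} relation $\sum a_i b_i=1$ is genuinely absent here — so the passage to $n=1$ cannot simply be inherited from Theorem~\ref{T.end_R-Rg} but must be redone, via the augmentation to $K$ and the absence of nontrivial one-sided inverses in $E(L).$
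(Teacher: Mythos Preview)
Your proposal is correct and follows the paper's argument essentially step for step: reduce via the $E\dashv B$ adjunction to a functorial family of Lie endomorphisms of the algebras $B(S)$ over $(E(L)\downarrow\Rg_K),$ extract $w(x)=\sum a_i x b_i$ from additivity, project the bracket identity onto the $R\,x_0 R\,x_1 R$ summand to recover~(\ref{d.sumsum}) and hence~(\ref{d.*dij}), note the absence of~(\ref{d.sum=1}), use the augmentation~(\ref{d.E->K}) to force $n\le 1,$ and finish with the PBW filtration to rule out nonscalar one-sided units. This is exactly the paper's route, including the explicit emphasis on where the Lie case diverges from Theorem~\ref{T.end_R-Rg}.
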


The above result, even in the characteristic-$\!p\!$ case,
concerns ordinary Lie algebras, not $\!p\!$-Lie algebras.
If $K$ is a field of characteristic $p>0,$ and $L$ a $\!p\!$-Lie
algebra over $K,$ we can begin the analysis of extended inner
endomorphisms of $L$ as above, with
$E_p(L)$ in place of $E(L),$ and go through much the
same argument, using as before the fact that $w(x)$ respects Lie
brackets, and conclude that every extended inner
endomorphism is either zero, or induced by an element $w(x)=a\,x\,b$
for $a,\,b\in E_p(L)$ satisfying $b\,a=1.$
(Note that this automatically implies that $w(x^p)=w(x)^p.)$
But we can no longer say that the relation $ba=1$
implies that $a,\,b\in K.$
For example, if $u$ is an element of $L$ such that $u^{[p]}=0,$ then
in $E_p(L)$ we have $u^p=0,$ so $1-u$ is a nonscalar invertible
element.
Hence we ask

\begin{question}\label{Q.pLie}
Can a $\!p\!$-Lie algebra $L$ over a field
$K$ have a nonzero non-identity extended inner endomorphism?

Equivalently, can $E_p(L)$ have elements $a,\,b,$ not in $K,$
satisfying $b\,a=1,$ and such
that in $E(L)\langle x\rangle,$ the element $w(x)=a\,x\,b$
lies in the $\!p\!$-Lie subalgebra generated by $L$ and $x$?
\end{question}

The first part of the above question can be divided in two:
Can such an $L$ have an extended inner {\em automorphism}
that is not the identity?
and can it have a nonzero extended inner endomorphism
that is not an extended inner automorphism?
The latter possibility can in turn be divided in two:
There might be an invertible element, conjugation by
which carries the $\!p\!$-Lie subalgebra generated by $L$ and $x$
into, but not onto, itself, or the extended inner endomorphism
might arise from elements $a,\ b$ such that $ba=1$ but $ab\neq 1;$
I do not know whether an enveloping algebra $E_p(L)$
can contain one-sided but not two-sided invertible elements.

However, we can again say that a nonzero extended
inner endomorphism is everywhere one-to-one.
For if our $w(x)=a\,x\,b,$ and if on mapping $x$ to some element
$u\in L'$ under a map $L\langle x\rangle\to L',$ we get
$a\,u\,b=0,$ then by multiplying this equation on the left by $b$ and
on the right by $a,$ we find that $u=0.$\vspace{.5em}

It is natural to ask
whether the methods we have used to study inner automorphisms, inner
endomorphisms, and inner derivations of associative and Lie algebras
are applicable to other classes of
not-necessarily-associative algebras.
Our results for associative algebras used the descriptions~(\ref{d.Rx})
and~(\ref{d.Rx0x1}) of the free extensions $R\,\langle x\rangle$
and $R\,\langle x_0,\,x_1\rangle$ of an algebra $R;$
and our partial results for Lie algebras
were based on reduction to the associative case.
For most varieties of $\!K\!$-algebras, the
descriptions of the universal one- and two-element
extensions of an algebra are not so simple.
I have not examined what can be proved in such cases.

\section{Co-inner endomorphisms.}\label{S.co-inner}

If $A$ is an object of a category $\fb{C},$
there is a construction dual to that of $(A\downarrow\fb{C}),$
namely $(\fb{C}\downarrow A),$ the category whose objects are
objects of $\fb{C}$ given with maps to $A,$ and morphisms
making commuting triangles with those maps.
Thus, we may dualize Definition~\ref{D.inner}, and define an
{\em extended co-inner} endomorphism of an object $A$ of
$\fb{C}$ to mean an endomorphism $E$ of the forgetful
functor $(\fb{C}\downarrow A)\to\fb{C},$ and a {\em co-inner}
endomorphism of $A$ itself to mean the value of such a morphism on $A.$

I don't know of important
naturally occurring examples, and I suspect that if
the concept turns out to be useful, it will be
so mainly in areas other than
algebra; but let us make a few observations on the algebra case.

Let $\bf{V}$ be a variety of algebras in the sense of universal algebra.
We begin with the weaker concept
of an extended co-inner {\em set-map} of $A;$
that is, an endomorphism $E$ of the composite of forgetful functors
\begin{equation}\label{d.>V>Set}
(\V\downarrow A)\ \longrightarrow\ \V
\ \longrightarrow\ \fb{Set}.
\end{equation}

To analyze such a mapping, let us, for each
$a\in A,$ consider the object of $(\V\downarrow A)$ given
by the homomorphism from the free $\!\V\!$-object on one generator,
$\langle x\rangle_\V,$ to $A,$ that takes $x$ to $a.$
If we apply our co-inner set-map $E$ to this homomorphism,
we get a set-map $\langle x\rangle_\V\to \langle x\rangle_\V;$
this will take $x$ to some element
$w_a(x)\in\langle x\rangle_\V;$ thus we get a family of
such elements $w_a(x)\in\langle x\rangle_\V,$ one for each $a\in A.$
We see that this family will determine $E;$
namely, for every object $f:B\to A$
of $(\V\downarrow A),$ and every element $b\in B,$
$E_f$ will take $b$ to $w_{f(b)}(b).$
Clearly, any $\!A\!$-tuple $(w_a(x))_{a\in A}$ of elements
of $\langle x\rangle_\V$ yields such an extended
``co-inner set map'' $E.$
(Remark: though there is an added complexity relative to the case of an
extended inner endomorphism of an algebra, in that we now have a
family of elements $w_a(x)$ rather than a single element $w(x),$
there is a corresponding decrease in complexity, in that these
lie in $\langle x\rangle,$ rather than~$A\langle x\rangle.)$

For most $\V,$ few extended co-inner
set maps will give endomorphisms of the algebras $B.$
One way to get examples which do so is to take all $w_a(x)$ the same,
with value giving what we called
in \S\ref{S.other} an ``absolute endomorphism of $\V$''.
E.g., for $\V=\fb{Ab}$ and $A$ any abelian
group, we may take all $w_a(x)$ equal to $n\,x$ for a fixed $n.$
More generally, for $\V$ the variety of modules over
a ring $R$ and $A$ any such module, we may take all
$w_a(x)$ equal to $c\,x$ for a fixed element $c$ of the center of~$R.$

However, here is a class of cases in which not all
co-inner endomorphisms are based on absolute endomorphisms.

\begin{theorem}\label{T.Set-G}
Let $G$ be a group, let $\fb{Set}_G$ be the variety of
right $\!G\!$-sets, let $A$ be an object of this variety,
and let $S$ be a set of representatives of the
orbits of $A$ under $G.$

Then every extended co-inner endomorphism $E$ of $A$ in
$\fb{Set}_G$ is an extended co-inner {\em automorphism},
and may be constructed by choosing,
for each $s\in S,$ an element $g_s$ of the centralizer
in $G$ of the stabilizer $G_s$ of $s,$
and for each $s\,h\in A$ $(s\in S,\ h\in G),$
letting $w_{sh}(x)=x\,h^{-1}g_s\,h.$

The extended co-inner endomorphisms of $A$ thus form a
group, isomorphic to the direct product, over $s\in S,$ of the
centralizers of the stabilizer subgroups $G_s.$
\end{theorem}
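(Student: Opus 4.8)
The plan is to run the general analysis of extended co-inner set-maps set up just before this theorem, and then impose the extra condition that the maps actually be $G$-equivariant. First I would identify the free right $G$-set on one generator $x$ with $G$ itself under right translation, with $x$ corresponding to $1\in G$; thus every element of $\langle x\rangle_{\fb{Set}_G}$ is uniquely $x\,g$ with $g\in G$. By the preceding discussion, an extended co-inner set-map $E$ of $A$ is determined by the family obtained by evaluating $E$ at the $G$-set maps $f_a\colon\langle x\rangle_{\fb{Set}_G}\to A$, $x\mapsto a$ $(a\in A)$; writing $w_a(x)=x\,k_a$ with $k_a\in G$, such $E$ correspond exactly to arbitrary families $(k_a)_{a\in A}$ in $G$, with $E_f(b)=b\,k_{f(b)}$ for every object $f\colon B\to A$ of $(\fb{Set}_G\downarrow A)$ and every $b\in B$.

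Next I would pin down which of these $E$ have all $E_f$ in $\fb{Set}_G$, i.e.\ $G$-set endomorphisms of $B$. Testing the homomorphism property on the maps $f_a$, and using that the action on $\langle x\rangle_{\fb{Set}_G}$ is free, yields the equivariance condition $k_{ag}=g^{-1}k_a\,g$ for all $a\in A$, $g\in G$; and this condition is also sufficient, since then $E_f(bg)=bg\,k_{f(b)g}=bg\,g^{-1}k_{f(b)}g=E_f(b)\,g$ for every $f$. (Functoriality of $E$ is automatic, as already noted for co-inner set-maps, the naturality squares being the underlying-set ones.) So the extended co-inner endomorphisms of $A$ correspond bijectively to the families $(k_a)_{a\in A}$ in $G$ satisfying $k_{ag}=g^{-1}k_a\,g$.

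Such a family is determined by its values on the orbit representatives $s\in S$ through $k_{sh}=h^{-1}k_s\,h$; but $s\,h=s\,h'$ precisely when $h'$ and $h$ differ by left multiplication by an element of $G_s$, so well-definedness of this formula forces $g^{-1}k_s\,g=k_s$ for all $g\in G_s$, i.e.\ $k_s=g_s$ lies in the centralizer of $G_s$ in $G$; conversely, any such choice of the $g_s$ makes $k_{sh}=h^{-1}g_s\,h$ a well-defined family satisfying the equivariance condition. This is the asserted construction, and $w_{sh}(x)=x\,k_{sh}=x\,h^{-1}g_s\,h$. To finish I would verify the last assertions: that each such $E$ is invertible, with inverse the extended co-inner endomorphism attached to $(k_a^{-1})$, which satisfies the same equivariance condition and so is again of the allowed form; and that $E\mapsto(k_a)_{a\in A}\mapsto(g_s)_{s\in S}$ is a homomorphism sending composition of functor-endomorphisms to coordinatewise multiplication — a short computation in which the equivariance condition is exactly what is needed to absorb a stray conjugation — hence a group isomorphism onto the direct product, over $s\in S$, of the centralizers of the $G_s$.

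The only delicate point, and the one I would watch most carefully, is the bookkeeping of the side on which $G$ acts together with the non-uniqueness of the decomposition $a=s\,h$; once the free $G$-set is correctly identified and the equivariance condition $k_{ag}=g^{-1}k_a\,g$ is in hand, the remaining steps are routine.
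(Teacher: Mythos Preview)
Your proposal is correct and follows essentially the same route as the paper's proof: identify the free right $G$-set with $xG$, write $w_a(x)=x\,g_a$, derive the equivariance relation $g_{ah}=h^{-1}g_a h$ from the requirement that the induced maps be $G$-set morphisms, and then read off both the centralizer condition on the $g_s$ and the group structure. The paper is terser about sufficiency and about the group isomorphism, but the argument is the same.
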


\begin{proof}
To get an extended co-inner endomorphism of $A,$ we must choose
for each $a\in A$ an element $w_a(x)$ of the free
$\!G\!$-set on one generator, which we will denote $x\,G,$
in a way that makes the resulting extended inner set-map
consist of morphisms of $\!G\!$-sets.
By the structure of $x\,G,$ we see that for each
$a\in A$ we have $w_a(x)=x\,g_a$ for a unique $g_a\in G.$

The condition for these maps to induce morphisms of $\!G\!$-sets
is that for every $a\in A$ and $h\in G,$
$w_a(x)\,h=w_{ah}(x\,h),$ in other words
\begin{equation}\label{d.Exh}
x\,g_a\,h\ =\ (x\,h)\,g_{ah}.
\end{equation}
The above equality is equivalent to $g_a\,h=h\,g_{ah},$ or
solving for $g_{ah},$
\begin{equation}\label{d.gh}
g_{ah}\ =\ h^{-1}g_a\,h\quad (a\in A,\ h\in G).
\end{equation}

If $h$ lies in the stabilizer
subgroup $G_a,$ we have $a\,h=a,$ so $g_{ah}=g_a,$ so
in this case~(\ref{d.gh}) says that $g_a$ commutes with $h.$
Hence $g_a$ lies in the centralizer of $G_a.$
For general $h,$~(\ref{d.gh}) allows us to compute $g_{ah}$ from $g_a,$
hence, the system of elements $g_a$ will be determined by those such
that $a$ lies in our set of coset representatives $S,$ and
the value at each $s\in S$ will belong to the centralizer of~$G_s.$

For elements $g_s$ so chosen, it is now easy to verify that we indeed
get an extended co-inner endomorphism of $A.$
The resulting endomorphisms are clearly invertible, and the description
of the group they form is immediate.
\end{proof}

\section{Concluding remarks.}\label{S.concl}

The tools used in \S\S\ref{S.group}-\ref{S.Lie} above are not new
from the point of view of category-theoretic universal algebra.

If we consider the general context of ``$\!M\!$-maps'' as in
\S\ref{S.deriv_def}, and then pass to the still more general
context, sketched parenthetically there, of a family of additional
operations on the underlying set of an object of $\V,$
of various arities, subject to some set of identities, we see that
this constitutes a structure of algebra in a
variety $\fb{W}$ whose
operations and identities include those of $\V.$
An ``extended inner system'' of such operations on an object
$A$ of $\V$ then means a factorization of the forgetful functor
$(A\downarrow\V)\to\V$ through the forgetful functor $\fb{W}\to\V.$
From the point of view of the theory of representable algebra-valued
functors (\cite{Freyd}, \cite[Chapter~9]{245},
\cite[Chapters I-II]{coalg}), this corresponds to starting with the
representing object for the former forgetful functor,
namely, $A\langle x\rangle$ with the canonical system of
co-operations that make it
a co-$\!\V\!$-object of the variety $(A\downarrow\V),$ and enhancing
that co-$\!\V\!$-structure in an arbitrary
way to a co-$\!\fb{W}\!$-structure; i.e.,
supplying additional co-operations
which co-satisfy the identities of $\fb{W}.$
These co-operations will be determined by their actions
on the element $x,$ so by studying the images of $x$ under them,
one may attempt to determine the
form that the additional co-operations can take.

Thus, what we have been doing falls under the general study of
representable functors and the coalgebras that represent them.
I consider the contribution of this note not to lie in the
maximum generality to which the concepts could be pushed (which
comes to that existing general theory),
but, inversely, in the focus on a specific class
of such problems: those where an added unary operation constitutes
a type of additional structure on the objects in question
that is already of interest, e.g., an endomorphism, or a derivation.
We have gotten exact descriptions of the possibilities for this
structure in several such cases, and
shown the technique that can be applied to further cases.

Of course, if this note leads some readers to an interest in the
general theory of coalgebras and representable functors among
varieties of algebras
\cite{Freyd}, \cite[Chapter~9]{245}, \cite{coalg}, I will
be all the more pleased.

\section{Appendix: Inner endomorphisms of associative algebras are one-to-one.}\label{S.1-1}

We noted in the second paragraph after Theorem~\ref{T.end_R-Rg}
that the one-one-ness of every inner endomorphism of
an associative unital algebra $R$
over a field $K,$ which follows from that theorem, also has
an elementary proof, using the fact that $R$
can be embedded in a simple $\!K\!$-algebra.
We prove below a different embedding result, from which
we deduce, more generally,
the one-one-ness of all inner endomorphisms of associative
unital algebras over arbitrary $K.$

Below, $K$ is, as usual, a commutative associative unital ring,
and $\otimes$ denotes $\otimes_K.$
$\!K\!$-algebras are here understood to be associative and unital.

\begin{lemma}\label{L.meet_ctr}
Every $\!K\!$-algebra $R$ admits an embedding $f:R\to S$ in
a $\!K\!$-algebra $S$ with the property that for every
nonzero $r\in R,$ the ideal $S\,f(r)\,S$ contains a nonzero
element of the center of $S.$
\end{lemma}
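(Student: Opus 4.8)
The plan is to construct $S$ as a universal enlargement of $R$ in which, for every nonzero $r\in R$, a product of $r$ with two new generic elements is forced to be central. Explicitly, let $R\langle X\rangle$ be the free extension of $R$ in $\Rg_K$ by the set of indeterminates $X=\{u_r,v_r : r\in R\setminus\{0\}\}$ --- the evident many-variable version of the construction~(\ref{d.Rx})--(\ref{d.Rx0x1}) of \S\ref{S.ring}, so that $R$, being its degree-zero part, is a subalgebra of $R\langle X\rangle$ --- and let $S$ be the quotient of $R\langle X\rangle$ by the two-sided ideal generated by all commutators $[u_r\,r\,v_r,\,w]$, where for each nonzero $r$ the element $w$ ranges over a generating set of $R\langle X\rangle$ as a $K$-algebra. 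Write $f\colon R\to S$ for the composite $R\hookrightarrow R\langle X\rangle\twoheadrightarrow S$ and $c_r\in S$ for the image of $u_r\,r\,v_r$. Then, by construction, each $c_r$ (for $r\neq 0$) is central in $S$ and lies in $S\,f(r)\,S$; so the entire statement will follow once we know that $c_r\neq 0$ in $S$ for every nonzero $r\in R$ --- a condition which already contains the injectivity of $f$, since $c_r=u_r\,f(r)\,v_r$. (Only elements of $R$, not of $S$, are required to have the central-element property, so this single step suffices; alternatively one could adjoin generators for one nonzero $r$ at a time and pass to a direct limit.)

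Thus everything reduces to proving $c_r\neq 0$ for each nonzero $r\in R$, and this is the main obstacle. I would establish it by a normal-form argument for the presentation defining $S$, in the spirit of the diamond lemma for ring theory: the defining relations are the multiplication table of $R$ in a fixed $K$-basis together with the ``centrality'' relations $c_r w=w c_r$, and one chooses a monomial order under which the latter are read as reductions pushing an occurrence of $u_r\,r\,v_r$ past any smaller factor. Checking that the overlap ambiguities resolve then yields a $K$-basis of $S$ consisting of reduced monomials, from which one reads off both that the images in $S$ of a $K$-basis of $R$ stay linearly independent (so $f$ is injective) and that each word $u_r\,r\,v_r$ is irreducible, hence nonzero. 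A minor technical nuisance is that $c_r=u_r\,r\,v_r$ is a $K$-linear combination of monomials $u_r\,b\,v_r$ (with $r$ expanded in the basis of $R$), not a single letter; this is smoothed over by introducing the $c_r$ as auxiliary generators with defining relations $c_r=u_r\,r\,v_r$, oriented so that every pattern $u_r\,(\,\cdot\,)\,v_r$ reduces to $c_r$. Alternatively, the non-vanishing of $c_r$ can be extracted from the known structure theory of coproducts of $K$-algebras, using that forcing a product of generic elements to be central is a ``mild'' relation, inert as to whether a prescribed element of a coproduct factor is nonzero.

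Granting that $c_r\neq 0$ for all nonzero $r\in R$, the lemma is immediate: then $f$ is an embedding, and for each nonzero $r\in R$ the element $c_r=u_r\,f(r)\,v_r$ is a nonzero element of the center of $S$ lying in $S\,f(r)\,S$. So the only real work is the normal-form verification; the construction of $S$ and the deduction of the conclusion are purely formal.
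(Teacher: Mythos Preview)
Your strategy---build a universal $S$ in which every $u_r\,r\,v_r$ is forced to be central, then verify that nothing collapses---is reasonable in outline, but the verification is where the lemma actually lives, and you have not supplied it.  Worse, the diamond-lemma argument you sketch rests on ``the multiplication table of $R$ in a fixed $K$-basis''; here $K$ is an arbitrary commutative ring, so $R$ need not be free as a $K$-module, and the standard diamond-lemma framework does not directly apply.  The workaround you propose (introduce each $c_r$ as a new generator and orient reductions so that ``every pattern $u_r\,(\cdot)\,v_r$ reduces to $c_r$'') is not right as stated either: $u_r\,s\,v_r$ should certainly not reduce to $c_r$ for arbitrary $s\in R$.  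The alternative appeal to coproduct structure theory is too vague to evaluate.  So what you have is a plan, not a proof, and the plan has a genuine obstacle over non-field $K$.

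The paper sidesteps all of this with a short explicit construction.  Let $\theta$ be the swap automorphism of $R\otimes R$, form the twisted polynomial ring $(R\otimes R)[t;\theta]$, pass to the subring whose constant term lies in $R\otimes 1$, and kill everything of $t$-degree~$>2$.  As a $K$-module one then has
\[
S\ =\ (R\otimes 1)\ \oplus\ (R\otimes R)\,t\ \oplus\ (R\otimes R)\,t^2,
\]
with $f(r)=r\otimes 1$.  For nonzero $r$ the element $t\,f(r)\,t=(1\otimes r)\,t^2$ is visibly nonzero, since $r\mapsto 1\otimes r$ is injective (the multiplication map $R\otimes R\to R$ splits it); and it is central because it annihilates the $t$- and $t^2$-summands while both $1\otimes r$ and $t^2$ commute with $R\otimes 1$.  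No bases, no normal forms, no ambiguity checks.  The moral worth taking away: when the universal object is hard to compute in, it is often far easier to exhibit a small hand-built algebra with the desired property than to prove the universal one has it.
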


\begin{proof}
Given $R,$ first form the $\!K\!$-module $R\otimes R,$ and
note that the two maps $R\to R\otimes R$ given by
$r\mapsto r\otimes 1$ and $r\mapsto 1\otimes r$ are one-to-one,
since the map $R\otimes R\to R$ induced by the internal multiplication
of $R$ gives a left inverse to each of them.

Now regard $R\otimes R$ as a $\!K\!$-algebra in the usual way,
i.e., so that $(r_1\otimes r_2)\cdot (r'_1\otimes r'_2)=
(r_1r'_1\otimes r_2r'_2).$
By the above observation, $r\mapsto r\otimes 1$ and
$r\mapsto 1\otimes r$ are embeddings of $\!K\!$-algebras.
Note that their images centralize one another, and that
the map $\theta:R\otimes R\to R\otimes R$ defined by
$\theta(r_1\otimes r_2)=r_2\otimes r_1$ is an automorphism of
$R\otimes R.$
Using this automorphism, let us form the twisted polynomial
algebra $(R\otimes R)[t;\theta];$
i.e., adjoin to $R\otimes R$ an indeterminate $t$ satisfying
\begin{equation}\label{d.trs}
t\,(r_1\otimes r_2)\ =\ (r_2\otimes r_1)\,t\quad
\r{for}\ \ r_1,\,r_2\in R.
\end{equation}

Within $(R\otimes R)[t;\theta],$ we now
take the subalgebra of elements whose
constant terms lie in $R\otimes 1,$ and let $S$
be the quotient of this subalgebra by the ideal of all elements in
which $t$ appears with exponent $>2.$
Thus, as a $\!K\!$-module,
\begin{equation}\label{d.S=}
S\ =
\ (R\otimes 1)\ \oplus\ (R\otimes R)\,t\ \oplus\ (R\otimes R)\,t^2.
\end{equation}
We now define our algebra embedding $f:R\to S$ by
\begin{equation}\label{d.f:R>S}
f(r)\ =\ r\otimes 1.
\end{equation}
For every nonzero $r\in R,$ the ideal $S\,f(r)\,S$ contains the element
\begin{equation}\label{d.tfrt}
t\,f(r)\,t\ =\ t\,(r\otimes 1)\,t\ =\ (1\otimes r)\,t^2,
\end{equation}
which we see from the right-hand side of the above equation is nonzero.
Because this element involves $t$ to the second power, it
annihilates on both sides the summands of~(\ref{d.S=}) involving $t.$
It also centralizes the summand $R\otimes 1,$ since
the factors $1\otimes r$ and $t^2$ both do so.
So~(\ref{d.tfrt}) gives the desired nonzero central element.
\end{proof}

To make use of this result, recall that in our category of
$\!K\!$-algebras, an endomorphism of an object by definition
fixes the unit, and that in \S\ref{S.ring}
we translated this to the condition~(\ref{d.sum=1})
on extended inner endomorphisms.
The proof of the next result shows that in this respect, extended inner
endomorphisms cannot tell the difference between the unit
and other $\!R\!$-centralizing elements.

\begin{lemma}\label{L.fix_ctr}
If $R$ is a $\!K\!$-algebra, and $(\beta_f)$ an
extended inner endomorphism of $R,$ then for every homomorphism
$f:R\to S$ of $\!K\!$-algebras, the endomorphism
$\beta_f$ of $S$ fixes all elements of $S$ that centralize
$f(R),$ hence, in particular, all elements of the center of $S.$
\end{lemma}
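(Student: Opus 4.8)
The plan is to read off the shape of an arbitrary extended inner endomorphism from the analysis already carried out in \S\ref{S.ring}, and then do a one-line computation. Recall that the argument there showing that the element $w(x)\in R\langle x\rangle$ associated with an extended inner endomorphism $(\beta_f)$ must be homogeneous of degree one in $x$, i.e.\ of the form $w(x)=\sum_{i=1}^{n}a_i\,x\,b_i$ as in~(\ref{d.wx=sum}), and that it must satisfy $\sum_{i=1}^{n}a_i\,b_i=1$ as in~(\ref{d.sum=1}), uses only the $\!K\!$-module decompositions~(\ref{d.Rx}) and~(\ref{d.Rx0x1}); the hypothesis that $K$ is a field enters only afterward, in the passage from~(\ref{d.sumsum}) to~(\ref{d.*dij}). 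Hence both of these facts are available for an arbitrary commutative ring $K$, and for every homomorphism $f:R\to S$ the endomorphism $\beta_f$ is given by $\beta_f(t)=\sum_{i=1}^{n}f(a_i)\,t\,f(b_i)$ for all $t\in S$.

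Next I would evaluate $\beta_f$ on an element $c\in S$ that centralizes $f(R)$. Since $a_1,\dots,a_n,b_1,\dots,b_n\in R$, their images $f(a_i),f(b_i)$ all lie in $f(R)$, hence commute with $c$; so in each term of $\sum_{i=1}^{n}f(a_i)\,c\,f(b_i)$ the factor $c$ may be carried past $f(b_i)$ and then pulled outside the sum, using that $f$ is a ring homomorphism:
\[
\beta_f(c)\ =\ \sum_{i=1}^{n}f(a_i)\,c\,f(b_i)\ =\ \sum_{i=1}^{n}f(a_i)\,f(b_i)\,c\ =\ f\Big(\sum_{i=1}^{n}a_i\,b_i\Big)\,c\ =\ f(1)\,c\ =\ c.
\]
In particular every element of the center of $S$ centralizes all of $S$, hence $f(R)$, and is therefore fixed by $\beta_f$; this yields both assertions of the lemma.

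I do not expect a genuine obstacle here. The only point that needs care is confirming that the two ingredients imported from \S\ref{S.ring} --- the degree-one form~(\ref{d.wx=sum}) of $w(x)$ and the unit relation~(\ref{d.sum=1}) --- really hold for a general commutative ring $K$ and not merely for a field, which a quick inspection of that section confirms. Once that is granted, everything else is the displayed computation above.
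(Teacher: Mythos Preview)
Your proof is correct and is essentially the same as the paper's: both rely on the shape $w(x)=\sum a_i\,x\,b_i$ from~(\ref{d.wx=sum}) and the relation $\sum a_i b_i=1$ from~(\ref{d.sum=1}), then commute $c$ past the $b_i$'s. Your version is slightly more explicit in keeping track of $f$ and in noting that these two ingredients were established in \S\ref{S.ring} before the field hypothesis was invoked, but the argument is the same.
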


\begin{proof}
By abuse of notation, let us use the same symbols for
elements of $R$ and their images in $S.$
If $c\in S$ centralizes $R,$ then applying~(\ref{d.wx=sum}) to $c,$
and commuting $c$ past the coefficients $b_i\in R,$ we get
$\beta_f(c)\ =\ \sum_1^n a_i\,b_i\,c,$
which by~(\ref{d.sum=1}) simplifies to~$c.$
\end{proof}

We can now prove

\begin{proposition}\label{P.1-1}
Every inner endomorphism $\alpha$ of an associative unital
$\!K\!$-algebra $R$ is one-to-one.
\end{proposition}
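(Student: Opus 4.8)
The plan is to deduce the proposition in a few lines from the two preceding lemmas, using the coherence condition~(\ref{d.funct}) built into the notion of an extended inner endomorphism. The substantive work has already been done in Lemma~\ref{L.meet_ctr}, so the argument below is really just an assembly.

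First I would set up notation: since $\alpha$ is inner, write $\alpha=\beta_{\r{id}_R}$ for some extended inner endomorphism $(\beta_f)$ of $R$; the goal is to show $\ker\alpha=0$. The key preliminary observation is that for \emph{every} object $f:R\to S$ of $(R\downarrow\Rg_K)$, the homomorphism $f$ is itself a morphism of that comma category, from the object $\r{id}_R:R\to R$ to the object $f:R\to S$ (indeed $f\circ\r{id}_R=f$). Applying~(\ref{d.funct}) to this morphism yields $\beta_f\circ f=f\circ\beta_{\r{id}_R}=f\circ\alpha$; in particular, whenever $r\in R$ satisfies $\alpha(r)=0$ we get $\beta_f(f(r))=f(\alpha(r))=0$.

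Next I would bring in Lemma~\ref{L.meet_ctr}: choose $f:R\to S$ to be an embedding such that, for every nonzero $r\in R$, the ideal $S\,f(r)\,S$ contains a nonzero central element of $S$. Suppose for contradiction that $\alpha(r)=0$ for some $r\neq 0$. Since $f$ is injective, $f(r)\neq 0$, so $S\,f(r)\,S$ contains a nonzero $c\in Z(S)$; write $c=\sum_i s_i\,f(r)\,s_i'$ with $s_i,s_i'\in S$. Since $\beta_f$ is a $\!K\!$-algebra endomorphism and $\beta_f(f(r))=0$, I get $\beta_f(c)=\sum_i\beta_f(s_i)\,\beta_f(f(r))\,\beta_f(s_i')=0$. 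On the other hand, $c$ is central in $S$, so Lemma~\ref{L.fix_ctr} gives $\beta_f(c)=c$. Hence $c=0$, contradicting the choice of $c$. Therefore no such $r$ exists, and $\alpha$ is one-to-one.

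I do not expect a genuine obstacle here: the delicate content sits in Lemma~\ref{L.meet_ctr} (the twisted polynomial ring $(R\otimes R)[t;\theta]$ cut down modulo $t^{3}$) and in Lemma~\ref{L.fix_ctr} (which uses the normalization $\sum a_i b_i=1$ from~(\ref{d.sum=1}) to see that $\beta_f$ fixes $\!R\!$-centralizing elements), both already established; the only thing to watch in the assembly is getting the orientation of the triangle in~(\ref{d.funct}) right in the first step.
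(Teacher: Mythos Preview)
Your proof is correct and follows essentially the same route as the paper's: invoke Lemma~\ref{L.meet_ctr} to embed $R$ in an $S$ where each nonzero $f(r)$ generates an ideal meeting $Z(S)$ nontrivially, then use Lemma~\ref{L.fix_ctr} to see that $\beta_f$ fixes that central element, forcing $\beta_f(f(r))=f(\alpha(r))\neq 0$. The paper packages the contradiction step as the single chain $0\neq c=\beta_f(c)\in\beta_f(S\,f(r)\,S)\subseteq S\,\beta_f(f(r))\,S=S\,f(\alpha(r))\,S$, whereas you spell out $c=\sum s_i\,f(r)\,s_i'$ and compute termwise, but this is purely cosmetic.
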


\begin{proof}
Given $R,$ take an embedding $R\to S$ as in Lemma~\ref{L.meet_ctr}.
Thus for any nonzero $r\in R,$ $S\,f(r)\,S$ contains
a nonzero central element $c.$
By Lemma~\ref{L.fix_ctr}, $\beta_f(c)=c.$
So
\begin{equation}\label{d.bfc}
0\ \neq\ c\ =\ \beta_f(c)\ \in\ \beta_f(S\,f(r)\,S)\ \subseteq
\ S\,\beta_f(f(r))\,S\ =\ S\,f(\alpha(r))\,S,
\end{equation}
so $\alpha(r)\neq 0.$
\end{proof}

Incidentally, in Lemma~\ref{L.meet_ctr},
we made our construction satisfy the strong conclusion
that $S\,f(r)\,S$ have nonzero intersection
with the center of $S,$ since that seemed
of independent interest; but for the proof of Proposition~\ref{P.1-1},
it would have sufficed that $S\,f(r)\,S$ have nonzero intersection
with the centralizer of $f(R).$
This could have been achieved by the simpler construction
\begin{equation}\label{d.S=2}
S\ =\ (R\otimes R)[t;\theta],
\end{equation}
with $f:R\to S$ again defined by $f(r)=r\otimes 1.$
Indeed, $t\,f(r)\,t=(1\otimes r)\,t^2$ clearly still centralizes
$f(R)=R\otimes 1.$


\noindent
\end{document}